\newtheorem{theorem}{Theorem}
\theoremstyle{plain}
\newtheorem{corollary}{Corollary}
\newtheorem{lemma}{Lemma}
\newtheorem{remark}{Remark}
\numberwithin{equation}{section}
\begin{document}
\title[Perturbation of the mapping and solvability]{Perturbation of the
mapping and solvability theorems in the Banach space}
\author{Kamal N. Soltanov }
\address{{\small Department of Mathematics, Faculty of Sciences, Hacettepe
University, Beytepe, Ankara, TR-06532, TURKEY}}
\email{soltanov@hacettepe.edu.tr ; sultan\_kamal@hotmail.com\ }
\date{}
\subjclass[2000]{Primary 47J10, 47H14, 46E35\bigskip ; Secondary 35G15\strut
, 35Q55}
\keywords{Continuous mapping, perturbation, equation (inclusion) in Banach
space, solvability theorem}

\begin{abstract}
Here we consider a perturbation of a continuous mappings on Banach spaces,
and investigate their image under various conditions. Consequently we study
the solvability of some classes of equations and inclusions. For these we
start by the investigation of local properties of the considered mapping and
local comparing this mapping with certain smooth mappings. Moreover we study
different mixed problems.
\end{abstract}

\maketitle

\section{Introduction}

In this article we consider some perturbation of continuous mappings on
Banach spaces, and investigate their image under various conditions. Further
we study the solvability of some equations and inclusions, and also some
mixed problems. Here we obtain such results that can be used under
investigation of the various problems that generate a mapping, which is only
continuous. We would like to investigate of the obtained continuous mapping
under sufficiently less additional conditions as in [28]. It should be noted
the nonlinear mappings and equations (inclusions) were investigated in many
works earlier under different conditions (see, for example, [1-5, 7- 10,
13,14, 16, 18, 20, 21, 24 - 26, 28-33] etc.). And also the perturbations of
the mappings were investigated earlier in the different cases (see, for
example, [6, 17, 19, 21, 24, 26, 30, 32, 33] etc.). For the investigation of
the considered problem we will use several different methods.

Here for investigation of a mapping we will use the method of the locally
comparison. Namely we will compare the studied mapping with certain known
mapping, which is either studied earlier or easily studied mapping in some
sense. We will note that here the local comparing is conducted between the
considered mapping and certain continuous mapping studied in the [28]. In
other words the method used here is based to results of [28, 30, 31].
Furthermore we investigate the perturbation of the continuous mappings, the
images of which possess some known properties. Here the obtained results can
be applied to study different problems for differential equations and
inclusions. However in this article these results will be applied to the
mixed problems for only nonlinear differential equations.

Whereas we will use the main result of [28], therefore we deduce it here.

Let $X$ , $Y$ be reflexive Banach spaces and $X^{\ast }$, $Y^{\ast }$ are
their dual, respectively; and let $f:D\left( f\right) $ $\subseteq
X\rightarrow Y$ be a bounded mapping (in general, multivalued). We will note
that $f:D\left( f\right) $ $\subseteq X\rightarrow Y$ is called a bounded
mapping iff the image $f\left( G\right) $ is a bounded subset of $Y$ for any
bounded subset $G$ (in $X$) of $D\left( f\right) $. Henceforth we will
denote by $B_{r}^{X}\left( x_{0}\right) $ a closed ball and by $%
S_{r}^{X}\left( x_{0}\right) $ its boundary (i.e. of sphere) in the
respective space $X$ with center $x_{0}\in X$ and radius $r>0$.

\begin{theorem}
Let $f:D\left( f\right) $ $\subseteq X\rightarrow Y$ be a bounded mapping
and $Y$ be a reflexive Banach space with strictly convex norm together with
its dual space $Y^{\ast }$. We assume that on a closed ball $%
B_{r_{0}}^{X}\left( x_{0}\right) \subseteq D\left( f\right) $ the following
conditions hold:

$\alpha )$ $f$ is a multivalued lower semi-continuous mapping (i.e. $f\in
C_{lsc}^{0}$; if $f$ is single-valued then $f\in C^{0}$) such that $f\left(
x\right) $ is a closed convex set for any $x\in B_{r_{0}}^{X}\left(
x_{0}\right) $;

$\beta )$ there are continuous functions $\mu :R_{+}^{1}\longrightarrow
R_{+}^{1}$, $\nu :R_{+}^{1}\longrightarrow R^{1}$ nondecreasing under $\tau
\geq \tau _{0}>0$ ($\mu ,\nu \in C^{0}$) such that

$\beta _{1})$ 
\begin{equation}
\left\Vert y-y_{0}\right\Vert _{Y}\leq \mu \left( \left\Vert
x-x_{0}\right\Vert _{X}\right) ,\quad \forall y\in f\left( x\right) ,\
\forall y_{0}\in f\left( x_{0}\right) ;  \tag{0.1}
\end{equation}%
holds for any $x\in B_{r_{0}}^{X}\left( x_{0}\right) $;

$\beta _{2})$ there are $y_{0}\in f\left( x_{0}\right) $, and a mapping $%
g:D\left( g\right) \subseteq X\rightarrow Y^{\ast }$, such that $g^{-1}\in
C^{0}$, $g\left( x_{0}\right) =0$, for $U_{0}\subset D\left( g\right) $ $cl$%
\/\/$\left( g\left( U_{0}\right) \right) \equiv B_{1}^{Y^{\ast }}\left(
0\right) $, $cl$\/$\left( g\left( U_{1}\right) \right) \supseteq
S_{1}^{Y^{\ast }}\left( 0\right) $, $U_{1}\equiv g^{-1}\left( S_{1}^{Y^{\ast
}}\left( 0\right) \cap g\left( U_{0}\right) \right) $, and for any $y^{\ast
}\in g\left( U_{0}\right) \subseteq B_{1}^{Y^{\ast }}\left( 0\right) $,$\
x=g^{-1}\left( y^{\ast }\right) \in U_{0}$ 
\begin{equation*}
\left\langle y-y_{0},y^{\ast }\right\rangle \geq \nu \left( \left\Vert
x-x_{0}\right\Vert _{X}\right)
\end{equation*}%
holds for any $y\in f\left( x\right) $, moreover$\ \nu \left( r_{0}\right)
\geq \delta _{0}>0$, where $clU_{0}\equiv B_{r_{0}}^{X}\left( x_{0}\right) $%
, $U_{0}\equiv B_{r_{0}}^{X}\left( x_{0}\right) \cap D\left( g\right) $ , $%
U_{1}\equiv S_{r_{0}}^{X}\left( x_{0}\right) \cap D\left( g\right) $.

Then $f\left( B_{r_{0}}^{X}\left( x_{0}\right) \right) $ has a subset, which
is everywhere dense in some connected subset $M_{0}\subset $ $Y$ with the
nonempty interior (i.e. $M_{0}$ is a bodily subset of $Y$). Moreover this
subset has the form \ 
\begin{equation}
M_{0}\equiv \left\{ y\in Y\left\vert \ \left\langle y,y^{\ast }\right\rangle
\leq \left\langle y_{x},y^{\ast }\right\rangle ,\right. \forall y^{\ast }\in
S_{1}^{Y^{\ast }}\left( 0\right) ,\ \exists x\in S_{r_{0}}^{X}\left(
x_{0}\right) ,\forall y_{x}\in f\left( x\right) \right\} .  \tag{0.2}
\end{equation}
\end{theorem}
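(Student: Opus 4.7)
The plan is to show that the interior of the candidate set $M_0$ in (0.2) lies in the closure of $f(B_{r_0}^X(x_0))$, by combining a Galerkin-type finite-dimensional reduction on the $Y^*$-side (enabled by the ``coordinate chart'' $g^{-1}$) with a Brouwer degree argument, and then passing to the limit in $Y$ using reflexivity and the modulus (0.1).

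First I would exploit the geometric picture supplied by $\beta_2$. Since $g^{-1}$ is continuous and $\mathrm{cl}(g(U_0))=B_1^{Y^*}(0)$, the map $y^*\mapsto g^{-1}(y^*)$ continuously parametrizes a dense subset of $U_0\subseteq B_{r_0}^X(x_0)$ by the unit ball of $Y^*$, sending a dense subset of $S_1^{Y^*}(0)$ into $U_1\subseteq S_{r_0}^X(x_0)$. The estimate $\nu(r_0)\ge\delta_0>0$ then yields, for each such $y^*$, the support inequality $\langle y-y_0,y^*\rangle\ge\delta_0$ valid for every $y\in f(g^{-1}(y^*))$. This identifies the supporting hyperplanes used in (0.2), shows that $B_{\delta_0}^Y(y_0)\subseteq M_0$, and hence that $M_0$ is bodily.

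Second, fix $\bar y\in\mathrm{int}(M_0)$ and a countable dense family $\{y_k^*\}_{k\ge1}\subset S_1^{Y^*}(0)$. For each $n\ge1$ set $Y_n^*:=\mathrm{span}\{y_1^*,\dots,y_n^*\}$ and, after an $\varepsilon$-net approximation of $g(U_0)\cap \overline{B}^{Y_n^*}_1(0)$, choose a section $V_n\subset U_0$ which via $g$ is homeomorphic to a closed $n$-ball with $\partial V_n\subset U_1$. By Michael's selection theorem, applicable under $\alpha)$, pick a continuous single-valued selection $\tilde f:V_n\to Y$ of $f|_{V_n}$ and define
\begin{equation*}
F_n(x):=\sum_{k=1}^{n}\bigl\langle \tilde f(x)-\bar y,\,y_k^*\bigr\rangle\, y_k^*\in Y_n^*,\qquad x\in V_n.
\end{equation*}
Combining $\beta_2)$ with $\bar y\in\mathrm{int}(M_0)$ yields $\langle F_n(x),y^*\rangle\ge\delta_0-\|\bar y-y_0\|_Y>0$ whenever $y^*=g(x)$ and $x\in\partial V_n$; hence $F_n$ points outward on $\partial V_n$, so Brouwer's theorem delivers $x_n\in V_n$ with $F_n(x_n)=0$, i.e.\ $\langle \tilde f(x_n)-\bar y,\,y_k^*\rangle=0$ for $k=1,\dots,n$.

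Finally, I would pass to the limit: set $y_n:=\tilde f(x_n)\in f(x_n)$. By (0.1) the sequence $\{y_n\}$ is bounded in $Y$, hence (by reflexivity) has a weakly convergent subsequence $y_n\rightharpoonup y_\infty$. The orthogonality $\langle y_n-\bar y,y_k^*\rangle=0$ passes to the limit on the dense family $\{y_k^*\}\subset S_1^{Y^*}(0)$, forcing $y_\infty=\bar y$, and the strict convexity of $Y$ together with its dual (Kadec-Klee property) upgrades weak to strong convergence on a subsequence whose norms are controlled by (0.1). This produces elements of $f(B_{r_0}^X(x_0))$ arbitrarily close to $\bar y$, giving the claimed denseness. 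I expect the main obstacle to be the bookkeeping needed to force $g(U_0)$, which is only dense in $B_1^{Y^*}(0)$, to yield honest topological $n$-balls $V_n$ carrying the coercivity $\nu(r_0)\ge\delta_0$ uniformly; dovetailing this $\varepsilon$-net approximation on the $Y^*$-side with the Michael selection of $f$ on the $Y$-side is the technical heart of the argument.
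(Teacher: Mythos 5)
The paper you were given never proves this statement: Theorem 1 is imported verbatim as the main result of [28] ("Whereas we will use the main result of [28], therefore we deduce it here"), so there is no in-paper proof to compare with and your Galerkin-plus-Brouwer scheme must be judged on its own terms. On those terms it has two genuine gaps, beyond the section-construction difficulty you yourself flag. First, your boundary estimate does not cover $\mathrm{int}\,M_{0}$. What $\beta_{2})$ gives on $\partial V_{n}$ is $\left\langle \tilde f(x)-\bar y,\,g(x)\right\rangle \geq \delta_{0}-\left\Vert \bar y-y_{0}\right\Vert _{Y}$, and this is positive only when $\left\Vert \bar y-y_{0}\right\Vert _{Y}<\delta_{0}$; since $M_{0}$ is in general a much larger set than $B_{\delta_{0}}^{Y}(y_{0})$, your degree argument reaches only points of the small ball, not arbitrary interior points of $M_{0}$ as the theorem asserts. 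To get all of $\mathrm{int}\,M_{0}$ you must run the outward-pointing (Poincar\'e--Bohl) estimate with the half-space inequalities that define $M_{0}$ itself, with the witness $x$ identified with $g^{-1}(y^{\ast })$ and a uniform positive margin extracted from interiority together with compactness of the finite-dimensional sphere; none of this is in your text. (Also $\left\langle F_{n}(x),y^{\ast }\right\rangle$ does not typecheck, since $F_{n}(x)$ and $y^{\ast }$ both lie in $Y^{\ast }$; the quantity the argument actually needs is $\left\langle \tilde f(x)-\bar y,\,g(x)\right\rangle$ expanded in the coordinates of $g(x)$ in $\mathrm{span}\{y_{1}^{\ast },\dots ,y_{n}^{\ast }\}$, which presupposes $g(\partial V_{n})\subset Y_{n}^{\ast }$ --- precisely the unconstructed part of your sections $V_{n}$, given that $g(U_{0})$ is only dense in $B_{1}^{Y^{\ast }}(0)$.)

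Second, the endgame does not yield the stated conclusion. Your limit passage produces only $y_{n}\rightharpoonup \bar y$ weakly, and the proposed upgrade to strong convergence is unfounded: strict convexity of the norm does not imply the Kadec--Klee property, and even with Kadec--Klee one would need $\left\Vert y_{n}\right\Vert _{Y}\rightarrow \left\Vert \bar y\right\Vert _{Y}$, which (0.1) does not provide (it only gives boundedness). Since $f\left( B_{r_{0}}^{X}(x_{0})\right) $ is not convex, weak approximation of $\bar y$ by image points does not give norm-density of a subset of the image in $M_{0}$; as written you have proved at best a weak-topology density statement on a subset of $M_{0}$. The opening step (the support inequalities from $\beta_{2})$, the inclusion $B_{\delta_{0}}^{Y}(y_{0})\subseteq M_{0}$, and hence that $M_{0}$ is bodily) is essentially fine, but the two gaps above concern exactly the substance of the theorem and would have to be closed before the argument could be accepted.
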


Now we will study the perturbation the continuous mappings of such type when
these are single-valued or multi-valued (set valued) and also we will deduce
their comparative analysis. Later we will investigate various mixed problems
with use of the main results. In particular, here we will study a
modification of the Navier-Stokes equations.

\section{On Perturbation of a Mapping}

Let $X$ , $Y$\ be a Banach spaces. We will consider the mapping determined
in the form $f\left( x\right) \equiv f_{0}\left( x\right) +f_{1}\left(
x\right) $ where $f_{0},\ f_{1}:D\left( f\right) \subseteq X\longrightarrow
Y $ are some mappings. And we will study the image $f\left( G\right) $ of a
subset $G\subseteq D\left( f\right) $. As the eventual result we will study
the following equation or inclusion 
\begin{equation}
f\left( x\right) \equiv f_{0}\left( x\right) +f_{1}\left( x\right) =y\text{
\ or \ }f\left( x\right) \equiv f_{0}\left( x\right) +f_{1}\left( x\right)
\ni y\text{, \ \ }y\in Y.  \tag{1.1}
\end{equation}%
The solvability of (1.1) will be investigated when $f_{0}$ is a regular
mapping in some sense, and $f_{1}$ is some perturbation of $f_{0}$.

As known any continuous mapping $f$ can be approximated by a regular mapping
with some exactness. Therefore we can always represent $f$ in the form $%
f\equiv f_{0}+f_{1}$ where $f_{0}$ is the main and regular part, and $f_{1}$
is a nonsmooth part of the mapping $f$. Then for study\ of the solvability
of (1.1) as usually (see, [6, 21, 24] etc.) we need find some sufficiently
conditions on the perturbation.

So let $X$ be a reflexive Banach space with strictly convex norm together
with its dual space $X^{\ast }$\footnote{%
Henceforth we assume that all a reflexive Banach space possess such norm as
it is possible (see, [4, 21, 9]).}, moreover $X^{\ast }$ is a separable
space.

\begin{theorem}
Let $f:D\left( f\right) \subseteq X\longrightarrow X^{\ast }$ be a bounded
multivalued mapping that has the form $f\equiv f_{0}+\ f_{1}$ where $f_{0}$, 
$\ f_{1}$ are mappings acting from $X$ into $X^{\ast }$, $X$ be a reflexive
Banach space. Assume that on the closed ball $B_{r_{0}}^{X}\left( 0\right) $ 
$\subseteq D\left( f\right) $ $\subset X$ the following conditions are
fulfilled:

(\textit{i}) $f_{0}\in C^{0}$ be an interior mapping (i.e. $f_{0}$ is the
single-valued continuous open mapping), $f_{0}\left( 0\right) =0$;

(\textit{ii}) $f_{1}\in C_{cm}^{0}$ (i.e. is a continuous multivalued
mapping) such that the image $f_{1}\left( x\right) $ is convex closed set
for each $x\in B_{r_{0}}^{X}\left( 0\right) $, $0\in f_{1}\left( 0\right) $
and 
\begin{equation*}
\left\{ \left\langle y,x\right\rangle \left\vert ~\exists y\right. \in
f\left( x\right) \right\} \geq k\left\langle f_{0}\left( x\right)
,x\right\rangle ,
\end{equation*}%
\begin{equation*}
\sup \left\{ \left\Vert y\right\Vert _{X^{\ast }}\left\vert ~y\right. \in
f\left( x\right) \right\} \leq \mu \left( \left\Vert x\right\Vert
_{X}\right) ,\quad \forall x\in B_{r_{0}}^{X}\left( 0\right) ,
\end{equation*}%
\begin{equation}
\left\langle f_{0}\left( x\right) ,x\right\rangle \geq \nu \left( \left\Vert
x\right\Vert _{X}\right) \left\Vert x\right\Vert _{X},\text{ a.e. }x\in
B_{r_{0}}^{X}\left( 0\right) ,\ \nu \left( r_{0}\right) \geq \delta _{0}>0 
\tag{1.2}
\end{equation}%
hold, here $k>0$ is a constant and $\mu ,\nu \in C^{0}$ ($\mu
:R_{+}^{1}\longrightarrow R_{+}^{1}$, $\nu :R_{+}^{1}\longrightarrow R^{1}$)
are continuous functions such as in Theorem 1;

(iii) almost each $\widetilde{x}\in intB_{r_{0}}^{X}\left( 0\right) $
possess a neighborhood $V_{\varepsilon }\left( \widetilde{x}\right) $, $%
\varepsilon \geq \varepsilon _{0}>0$ that for $\forall x_{1},x_{2}\in
V_{\varepsilon }\left( \widetilde{x}\right) \cap B_{r_{0}}^{X}\left(
0\right) $ there is $y_{1}\in f\left( x_{1}\right) -f\left( x_{2}\right) $
such that\ following inequation is valid 
\begin{equation}
d^{Y}\left( f\left( x_{2}\right) -f\left( x_{1}\right) ,y_{1}\right) \geq
k_{1}\left( \widetilde{x}\right) \left\Vert f_{0}\left( x_{1}\right)
-f_{0}\left( x_{2}\right) \right\Vert _{X^{\ast }},  \tag{1.3}
\end{equation}%
here $k_{1}\left( \widetilde{x},\varepsilon \right) >0$ \footnote{%
Here 
\begin{equation*}
d^{Y}\left( G_{1};\widehat{y}\right) \equiv \inf \left\{ d^{Y}\left( y;%
\widehat{y}\right) \left\vert ~y\in G_{1}\right. \right\} =\inf ~\left\{
\left\Vert y-\widehat{y}\right\Vert _{Y}\left\vert ~y\in G_{1}\right.
\right\} ,
\end{equation*}%
and $f\left( x_{1}\right) -f\left( x_{2}\right) \equiv f\left( x_{1}\right)
\backslash f\left( x_{2}\right) $.}.

Then the image $f\left( B_{r_{0}}^{X}\left( 0\right) \right) $ of the ball $%
B_{r_{0}}^{X}\left( 0\right) $ is a bodily subset of $X^{\ast }$, moreover $%
f\left( B_{r_{0}}^{X}\left( 0\right) \right) $ contains a bodily subset $%
M^{\ast }$ that has the form 
\begin{equation*}
M^{\ast }\equiv \left\{ x^{\ast }\in X^{\ast }\left\vert \ \left\langle
x^{\ast },x\right\rangle \leq \left\langle y,x\right\rangle \left\vert
~\forall y\right. \in f\left( x\right) \right. \forall x\in
S_{r_{0}}^{X}\left( 0\right) \right\} .
\end{equation*}
\end{theorem}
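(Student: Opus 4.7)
The plan is to reduce Theorem~2 to Theorem~1 by verifying the hypotheses $\alpha$, $\beta_{1}$, $\beta_{2}$ for the sum $f = f_{0} + f_{1}$ with base point $x_{0} = 0$, $y_{0} = 0$ and target $Y = X^{\ast}$; by reflexivity $Y^{\ast} = X^{\ast\ast} = X$, so the auxiliary mapping $g$ of $\beta_{2}$ takes values in $X$. Condition $\alpha$ is routine: since $f_{0}$ is single-valued continuous and $f_{1}$ is continuous multivalued with closed convex values, the pointwise sum $f(x) = f_{0}(x) + f_{1}(x)$ is the translate of a closed convex set by the vector $f_{0}(x)$, hence closed convex valued, and is lower semi-continuous as a multivalued map; moreover $f_{0}(0) = 0$ and $0 \in f_{1}(0)$ give $y_{0} = 0 \in f(0)$. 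The modulus estimate $\beta_{1}$ is immediate from the $\mu$-bound in (ii).

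For $\beta_{2}$ I would use the simple scaling $g(x) = x/r_{0}$, regarded as a map from $X$ into $Y^{\ast} = X$. Then $g^{-1}(y^{\ast}) = r_{0}\,y^{\ast}$ is continuous, $g(0) = 0$, $\mathrm{cl}(g(B_{r_{0}}^{X}(0))) = B_{1}^{X}(0)$ and $g(S_{r_{0}}^{X}(0)) = S_{1}^{X}(0)$. For $y^{\ast} = x/r_{0} \in B_{1}^{X}(0)$ and any $y \in f(x)$, the first inequality in (ii) together with the coercivity (1.2) combine to give
\[
\langle y, y^{\ast}\rangle \;=\; \frac{1}{r_{0}}\langle y, x\rangle \;\geq\; \frac{k}{r_{0}}\langle f_{0}(x), x\rangle \;\geq\; \frac{k}{r_{0}}\,\nu(\|x\|_{X})\,\|x\|_{X} \;\equiv\; \tilde{\nu}(\|x\|_{X}),
\]
where $\tilde{\nu}$ is continuous, eventually nondecreasing, and satisfies $\tilde{\nu}(r_{0}) = k\,\nu(r_{0}) \geq k\,\delta_{0} > 0$. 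Theorem~1 then applies and yields a bodily set $M_{0} \subset X^{\ast}$ of the form (0.2) together with a subset of $f(B_{r_{0}}^{X}(0))$ that is everywhere dense in $M_{0}$; after the identification $y^{\ast} \leftrightarrow x = r_{0}\,y^{\ast}$, $M_{0}$ coincides with the set $M^{\ast}$ of the statement.

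It remains to upgrade the density to the honest inclusion $M^{\ast} \subseteq f(B_{r_{0}}^{X}(0))$, and this is precisely the role of condition (iii). Given $x^{\ast} \in M^{\ast}$, density produces $y_{n} \in f(x_{n})$ with $x_{n} \in B_{r_{0}}^{X}(0)$ and $y_{n} \to x^{\ast}$ in $X^{\ast}$; by reflexivity a subsequence satisfies $x_{n} \rightharpoonup x \in B_{r_{0}}^{X}(0)$. Applying (1.3) to pairs $(x_{n}, x_{m})$ in a common neighbourhood $V_{\varepsilon}(\tilde{x})$ of the covering in (iii), and reading off the footnote definition of $d^{X^{\ast}}$, the Cauchy behaviour of $(y_{n})$ forces $\|f_{0}(x_{n}) - f_{0}(x_{m})\|_{X^{\ast}} \to 0$; the openness of $f_{0}$ in (i) together with its coercivity then upgrade this to norm-Cauchyness of $(x_{n})$ itself, so $x_{n} \to x$ strongly, and the graph-closedness of $f$ inherited from the continuity of $f_{0}$ and $f_{1}$ yields $x^{\ast} \in f(x) \subseteq f(B_{r_{0}}^{X}(0))$. \textbf{The main obstacle} is precisely this last step: reading the set-difference convention $f(x_{1}) - f(x_{2}) \equiv f(x_{1}) \setminus f(x_{2})$ of (1.3) correctly, patching the local constants $k_{1}(\tilde{x},\varepsilon)$ across the ``almost each $\tilde{x}$'' covering, and, most delicately, exploiting openness of $f_{0}$ as a substitute for the (unavailable) continuity of a global inverse $f_{0}^{-1}$ in order to pass from the Cauchyness of $(f_{0}(x_{n}))$ to that of $(x_{n})$.
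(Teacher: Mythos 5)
Your first stage (reduction to Theorem 1 with $Y=X^{\ast}$, $Y^{\ast}=X$ and the scaling $g(x)=x/r_{0}$) is in substance what the paper does -- it invokes Lemma 1 of [28] for precisely this verification -- and it yields only that $f\left(B_{r_{0}}^{X}(0)\right)$ contains a subset dense in the bodily set $M^{\ast}$. (One small overreach there: the first inequality in (ii) asserts the existence of \emph{some} $y\in f(x)$ with $\left\langle y,x\right\rangle \geq k\left\langle f_{0}(x),x\right\rangle$, not the inequality for every $y\in f(x)$, whereas $\beta_{2}$ of Theorem 1 is stated for all $y\in f(x)$; this mismatch is exactly what the cited Lemma 1 of [28] is used to bridge, and your direct verification glosses over it.) The genuine gap is the second stage, which is where all of the content of Theorem 2 beyond Theorem 1 lies: you only announce that (1.3), openness of $f_{0}$ and graph-closedness of $f$ upgrade density to the inclusion $M^{\ast}\subseteq f\left(B_{r_{0}}^{X}(0)\right)$, and you yourself label the decisive steps -- handling the set-difference convention in (1.3), patching the local constants $k_{1}\left(\widetilde{x},\varepsilon\right)$, and passing from Cauchyness of $\left(f_{0}(x_{n})\right)$ to Cauchyness of $\left(x_{n}\right)$ -- as an unresolved ``main obstacle''. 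Naming the obstacles is not the same as overcoming them, so the proof is incomplete exactly where it matters.

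Concretely, two of those steps require arguments you do not supply. First, to use the \emph{local} inequality (1.3) you must show that, for large indices, preimages $x_{n}\in f^{-1}(y_{n})\cap B_{r_{0}}^{X}(0)$ can be chosen inside a single neighbourhood $V_{\varepsilon}\left(\widetilde{x}\right)$ on which (iii) holds with one constant; the weak convergence $x_{n}\rightharpoonup x$ that reflexivity provides does not place the $x_{n}$ in any fixed $\varepsilon_{0}$-ball in norm, so it cannot do this. The paper proceeds differently: it first proves a Lemma under the global form of (1.3) (where no localization is needed), and then, in the continuation of the proof, uses the fundamentality of $\left\{y_{m}\right\}$ together with the continuity of $f$ to select $\widetilde{x}$ and $\delta(\sigma)>0$ with $\left\{y_{m}\right\}_{m\geq m(\sigma)}\subset f\left(B_{\delta(\sigma)}^{X}\left(\widetilde{x}\right)\right)$, so that (iii) is applied inside that one neighbourhood. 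Second, the passage from $\left\Vert f_{0}(x_{n})-f_{0}(x_{m})\right\Vert_{X^{\ast}}\rightarrow 0$ to norm-Cauchyness of (a subsequence of) the preimages is exactly where the hypothesis that $f_{0}$ is single-valued, continuous and open is used: the paper notes at the outset that this makes $f_{0}^{-1}$ lower semi-continuous and then selects a fundamental subsequence $x_{m_{j}}\in f_{0}^{-1}\left(f_{0}(x_{m_{j}})\right)$ by that semicontinuity; the ``coercivity'' you invoke plays no role here and cannot by itself convert convergence of $f_{0}(x_{n})$ into convergence of $x_{n}$. Once $x_{m_{j}}\Longrightarrow x_{0}\in B_{r_{0}}^{X}(0)$ is secured, the conclusion $y_{0}\in f(x_{0})$ follows from (i), (ii) and the closedness of the values $f(x_{0})$, and the identification of the bodily set $M^{\ast}$ is completed via Corollary 2 of [28]; none of this is carried out in your proposal.
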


\begin{proof}
In beginning we will note that $f_{0}^{-1}$ is lower semi-continuous mapping
under the condition (\textit{i}) (see, [32, 2, 3]). From conditions (\textit{%
i}) and (\textit{ii}) follows, that the mapping $f$ satisfies all conditions
of Theorem 1 in the particular case when $Y\equiv X^{\ast }$ and $U\left( 
\widetilde{x}\right) \equiv B_{r_{0}}^{X}\left( 0\right) $ by virtue of
Lemma 1 of [28]. Consequently the image $f\left( B_{r_{0}}^{X}\left(
0\right) \right) $ is contains a subset $M_{0}^{\ast }$, which is dense in
some bodily subset of $X^{\ast }$ (particularly, in the set $M^{\ast }$).
Thus we must prove that the subset $M_{0}^{\ast }$ is closed, i.e. $%
\overline{M_{0}^{\ast }}\equiv clM_{0}^{\ast }\equiv M_{0}^{\ast }$ and $%
\overline{M_{0}^{\ast }}\subseteq $ $f\left( B_{r_{0}}^{X}\left( 0\right)
\right) $. For simplicity first we will prove the particular case of Theorem
2 and next we will conduct the continuation of this proof.
\end{proof}

\begin{lemma}
Let all conditions of Theorem 2 are fulfilled, furthermore let the
inequality (1.3) is fulfilled for any $x_{1},x_{2}\in B_{r_{0}}^{X}\left(
0\right) $ (i.e. $k_{1}\left( \widetilde{x},r_{0}\right) \equiv k_{1}\left(
r_{0}\right) $ for any $\widetilde{x}\in B_{r_{0}}^{X}\left( 0\right) $).
Then the statement of Theorem 2 is correct.
\end{lemma}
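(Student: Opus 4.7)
The plan is to complete the argument begun in the proof of Theorem 2. From the application of Theorem 1 on $B_{r_{0}}^{X}(0)$ we already know that $f(B_{r_{0}}^{X}(0))$ contains a subset $M_{0}^{\ast }$ which is everywhere dense in the bodily set $M^{\ast }$. Under the global version of (1.3) assumed in the lemma, all that remains is to show that $M_{0}^{\ast }$ is closed in $X^{\ast }$, for then $M^{\ast }=\mathrm{cl}(M_{0}^{\ast })=M_{0}^{\ast }\subseteq f(B_{r_{0}}^{X}(0))$, which is exactly the conclusion of Theorem 2.

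To prove closedness, I would pick $y^{\ast }\in \mathrm{cl}(M_{0}^{\ast })$ and a sequence $y_{n}\in M_{0}^{\ast }$ with $y_{n}\rightarrow y^{\ast }$ in $X^{\ast }$. Each $y_{n}$ is of the form $f_{0}(x_{n})+w_{n}$ for some $x_{n}\in B_{r_{0}}^{X}(0)$ and $w_{n}\in f_{1}(x_{n})$. The key step is to invoke (1.3) with the uniform constant $k_{1}(r_{0})$: for each pair $n,m$ one chooses representatives from $f(x_{n})\setminus f(x_{m})$ and $f(x_{m})\setminus f(x_{n})$ so that the estimate in (1.3) yields a bound of the shape
\begin{equation*}
\Vert f_{0}(x_{n})-f_{0}(x_{m})\Vert _{X^{\ast }}\leq k_{1}(r_{0})^{-1}\Vert y_{n}-y_{m}\Vert _{X^{\ast }},
\end{equation*}
forcing $\{f_{0}(x_{n})\}$ to be Cauchy in $X^{\ast }$ and hence convergent to some $z^{\ast }\in X^{\ast }$. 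Then, using reflexivity of $X$ (weak compactness of $B_{r_{0}}^{X}(0)$), continuity of $f_{0}$, and the fact that $f_{0}^{-1}$ is lower semi-continuous (by condition (\textit{i}) and Lemma~1 of [28]), I extract a subsequence $x_{n_{k}}\rightarrow x^{\ast }\in B_{r_{0}}^{X}(0)$ with $f_{0}(x^{\ast })=z^{\ast }$. Consequently $w_{n_{k}}=y_{n_{k}}-f_{0}(x_{n_{k}})\rightarrow y^{\ast }-z^{\ast }=:w^{\ast }$, and since $f_{1}$ is continuous multivalued with closed convex values (condition (\textit{ii})), passing to the limit gives $w^{\ast }\in f_{1}(x^{\ast })$. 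Therefore $y^{\ast }=f_{0}(x^{\ast })+w^{\ast }\in f(x^{\ast })\subseteq f(B_{r_{0}}^{X}(0))$, establishing closedness of $M_{0}^{\ast }$.

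The main obstacle will be converting $f_{0}(x_{n})\rightarrow z^{\ast }$ into actual (sub)convergence of the \emph{original} $x_{n}$ themselves in $B_{r_{0}}^{X}(0)$. Lower semi-continuity of $f_{0}^{-1}$ only produces \emph{some} preimage sequence converging to a preimage of $z^{\ast }$, whereas to conclude $w^{\ast }\in f_{1}(x^{\ast })$ one must retain the original $x_{n}$, which carry the constraint $w_{n}\in f_{1}(x_{n})$. Overcoming this will require exploiting reflexivity of $X$ together with the coercivity $\langle f_{0}(x),x\rangle \geq \nu (\Vert x\Vert )\Vert x\Vert $ from (1.2) (with $\nu (r_{0})\geq \delta _{0}>0$) and the interior/open-mapping character of $f_{0}$, either to upgrade a weak subsequential limit of $x_{n}$ to a strong one, or to arrange the argument so that any weak cluster point of $\{x_{n}\}$ is automatically a preimage of $z^{\ast }$ with the correct $f_{1}$-value by upper semi-continuity of $f_{1}$. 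A secondary technical point is the careful selection of the pair of representatives in the set differences $f(x_{n})\setminus f(x_{m})$ and $f(x_{m})\setminus f(x_{n})$ needed to make (1.3) yield the Cauchy estimate in the multivalued case, which must be handled by a short separate argument using the closed-convex-valuedness of $f$.
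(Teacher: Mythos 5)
Your route is the paper's own: use the global form of (1.3) to force $\left\Vert f_{0}\left( x_{n}\right) -f_{0}\left( x_{m}\right) \right\Vert _{X^{\ast }}\leq k_{1}\left( r_{0}\right) ^{-1}\left\Vert y_{n}-y_{m}\right\Vert _{X^{\ast }}$, conclude that $\left\{ f_{0}\left( x_{n}\right) \right\} $ is Cauchy, recover a strongly convergent preimage (sub)sequence in the closed ball, and deduce closedness of $M_{0}^{\ast }$. However, the decisive step --- the one you yourself label the ``main obstacle'', namely passing from convergence of $f_{0}\left( x_{n}\right) $ to strong (sub)convergence of the \emph{original} $x_{n}$ to some $x^{\ast }\in B_{r_{0}}^{X}\left( 0\right) $ with $f_{0}\left( x^{\ast }\right) =z^{\ast }$ --- is not actually carried out, and the tools you list do not obviously supply it: weak compactness of $B_{r_{0}}^{X}\left( 0\right) $ only gives a weak cluster point, the coercivity (1.2) gives no control of $\left\Vert x_{n}-x_{m}\right\Vert _{X}$, and $f_{0}$ is only norm-continuous, so from $x_{n_{k}}\rightharpoonup x^{\ast }$ you cannot conclude $f_{0}\left( x^{\ast }\right) =z^{\ast }$. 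This is a genuine gap, and it sits exactly where the paper's proof invokes condition (\textit{i}): since $f_{0}$ is a single-valued continuous \emph{open} (interior) mapping, $f_{0}^{-1}$ is lower semi-continuous, and from the convergence of $\left\{ f_{0}\left( x_{m_{j}}\right) \right\} $ the paper extracts a preimage subsequence $x_{m_{j}}\in f_{0}^{-1}\left( f_{0}\left( x_{m_{j}}\right) \right) $ that is fundamental in $X$, hence strongly convergent to some $x_{0}$, with $x_{0}\in B_{r_{0}}^{X}\left( 0\right) $ because the ball is closed. Without an argument of this type your subsequent chain ($w_{n_{k}}\rightarrow w^{\ast }\in f_{1}\left( x^{\ast }\right) $) never gets started.

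Two smaller points of comparison. First, once strong convergence $x_{n_{k}}\Longrightarrow x^{\ast }$ is in hand, your splitting $y_{n}=f_{0}\left( x_{n}\right) +w_{n}$ and the separate limit passage in $f_{1}$ are unnecessary: as in the paper, the Hausdorff-type continuity of $f$ (the $\mu $-estimate in condition (\textit{ii})) gives $d^{X^{\ast }}\left( f\left( x_{n_{k}}\right) ;\left\{ y^{\ast }\right\} \right) \rightarrow 0$, and closedness (and convexity) of the value $f\left( x^{\ast }\right) $ already yields $y^{\ast }\in f\left( x^{\ast }\right) $. Second, your selection of representatives in $f\left( x_{n}\right) \backslash f\left( x_{m}\right) $ tacitly assumes the generic situation; the paper first separates the degenerate case in which infinitely many $y_{n}$ lie in a single image $f\left( x\right) $ (settled at once by closedness of $f\left( x\right) $) and only then applies (1.3), and you should do the same before invoking the set differences.
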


\begin{proof}
(of Lemma) From above-mentioned reasoning follows there exists $M_{0}^{\ast
}\subseteq f\left( B_{r_{0}}^{X}\left( 0\right) \right) $ such as above and
therefore it is enough to show that $M_{0}^{\ast }$ is closed and $%
M_{0}^{\ast }\equiv \overline{M_{0}^{\ast }}\subseteq f\left(
B_{r_{0}}^{X}\left( 0\right) \right) $.

So, let $y_{0}\in \overline{M_{0}^{\ast }}$ and $f\left( B_{r_{0}}^{X}\left(
0\right) \right) \cap \overline{M_{0}^{\ast }}\subset \overline{M_{0}^{\ast }%
}$. Then there is a sequence $\left\{ y_{m}\right\} _{m\geq 1}\subset $ $%
f\left( B_{r_{0}}^{X}\left( 0\right) \right) $ such that $%
y_{m}\Longrightarrow y_{0}$ in $X^{\ast }$ at $m\longrightarrow \infty $ ($%
X^{\ast }$ is a separable space by assumption). Hence for every $\sigma >0$
there is a number $m\left( \sigma \right) \geq 1$ such that $\left\Vert
y_{m}-y_{m+\ell }\right\Vert _{X^{\ast }}<\sigma $ holds for any $m\geq
m\left( \sigma \right) $ and $\ell >0$. We shall prove that $y_{0}\in
f\left( B_{r_{0}}^{X}\left( 0\right) \right) $.

Here we have the following possible cases: either there is an infinite part
of the sequence $\left\{ y_{m}\right\} _{m\geq 1}$, which belong the image $%
f\left( x\right) $ of $x\in B_{r_{0}}^{X}\left( 0\right) $ for an element $x$
or there is a subsequence $\left\{ y_{m_{\ell }}\right\} _{\ell \geq 1}$ of
the sequence $\left\{ y_{m}\right\} _{m\geq 1}$ such that $x_{m_{\ell }}\in
f^{-1}\left( y_{m_{\ell }}\right) \cap B_{r_{0}}^{X}\left( 0\right) $ and $%
y_{m_{\ell }}\in f\left( x_{m_{\ell }}\right) -f\left( x_{m_{\ell
+p}}\right) $, $y_{m_{\ell +p}}\in f\left( x_{m_{\ell +p}}\right) -f\left(
x_{m_{\ell }}\right) $. First case is clearly as far as $f\left( x\right) $
is closed subset for any $x\in B_{r_{0}}^{X}\left( 0\right) $. Therefore we
will consider the second case. As $\left\{ y_{m}\right\} $ is a fundamental
sequence for any $\sigma >0$ there is $m\left( \sigma \right) \geq 1$ such
that $\left\{ y_{m}\right\} _{m\geq m\left( \sigma \right) }\subset
B_{\sigma }^{X^{\ast }}\left( y_{m\left( \sigma \right) }\right) $. And as $%
f $ is continuous mapping there is $\delta \left( \sigma \right) >0$ such
that $B_{\delta \left( \sigma \right) }^{X}\left( x_{m\left( \sigma \right)
}\right) \subset B_{r_{0}}^{X}\left( 0\right) $ and $f\left( B_{\delta
\left( \sigma \right) }^{X}\left( x_{m\left( \sigma \right) }\right) \right)
\subseteq B_{\sigma }^{X^{\ast }}\left( f\left( x_{m\left( \sigma \right)
}\right) \right) $. Moreover $\left\{ y_{m}\right\} _{m\geq m\left( \sigma
\right) }\subset f\left( B_{\delta \left( \sigma \right) }^{X}\left(
x_{m\left( \sigma \right) }\right) \right) \cap B_{\sigma }^{X^{\ast
}}\left( y_{m\left( \sigma \right) }\right) $ (maybe it holds for a
subsequence of $\left\{ y_{m}\right\} _{m\geq m\left( \sigma \right) }$).
Hence we can choose the preimage of $\left\{ y_{m_{\ell }}\right\} $ from $%
B_{\delta \left( \sigma \right) }^{X}\left( \widetilde{x}\right) $. Then for
the defined above subsequence there are $x_{m_{\ell }}\in f^{-1}\left(
y_{m_{\ell }}\right) \cap B_{r_{0}}^{X}\left( 0\right) $, $\ell
\longrightarrow \infty $ and $m_{\ell }>m\left( \sigma \right) $ satisfying
of the second case such that for any $m_{\ell }$, $m_{\ell +p}$ the
inequation $d^{X^{\ast }}\left( f\left( x_{m_{\ell }}\right) -f\left(
x_{m_{\ell +p}}\right) ,y_{m_{\ell +p}}\right) $ $<\sigma $ holds by virtue
of convexity of $f\left( x_{m_{\ell }}\right) $, $f\left( x_{m_{\ell
+p}}\right) $, by fundamentality of the sequence $\left\{ y_{m}\right\} $
and continuity of the mapping $f$. Then using the condition (\textit{iii})
we obtain that 
\begin{equation*}
\left\Vert f_{0}\left( x_{m_{\ell }}\right) -f_{0}\left( x_{m_{\ell
+p}}\right) \right\Vert _{X^{\ast }}\leq kd^{X^{\ast }}\left( f\left(
x_{m_{\ell }}\right) -f\left( x_{m_{\ell +p}}\right) ,y_{m_{\ell +p}}\right)
<k\sigma
\end{equation*}%
holds for any $p\geq 1$ under the assumption of Lemma.

From here using the condition (\textit{i}) we obtain there exists a
subsequence $\left\{ x_{m_{j}}\right\} _{j\geq 1}$ in the preimage, which is
defined in the form $x_{m_{j}}\in f_{0}^{-1}\left( \widetilde{y}%
_{m_{j}}\right) \equiv f_{0}^{-1}\left( f_{0}\left( x_{m_{j}}\right) \right) 
$ and which is fundamental sequence in $X$, as $f_{0}$ is the single-valued
continuous opened mapping. Consequently there is an element $x_{0}\in X$ for
which we have $x_{m_{j}}\Longrightarrow x_{0}$ at $j\longrightarrow \infty $%
, because the sequence $\left\{ f_{0}\left( x_{m_{j}}\right) \right\} $ is a
convergent. Then $x_{0}\in B_{r_{0}}^{X}\left( 0\right) $ as the ball $%
B_{r_{0}}^{X}\left( 0\right) $ is closed.

Thus from conditions (\textit{ii}) and (\textit{i}) follows that $d^{X^{\ast
}}\left( f\left( x_{m_{j}}\right) ;\left\{ y_{0}\right\} \right)
\longrightarrow 0$ in $X^{\ast }$ and $x_{m_{j}}\Longrightarrow x_{0}\in
B_{r_{0}}^{X}\left( 0\right) $ in $X$ at $j\nearrow \infty $. Consequently,
we have $y_{0}\in f\left( x_{0}\right) \subset $ $f\left(
B_{r_{0}}^{X}\left( 0\right) \right) $ as far as $x_{0}\in
B_{r_{0}}^{X}\left( 0\right) $. Hence using the corollary 2 of [28] we
obtain the correctness of this Lemma.
\end{proof}

It is not difficult to see that the following statement is true.

\begin{corollary}
Let all conditions of Theorem 2 is fulfilled except of inequation (1.3),
instead of inequation (1.3) is fulfilled the following inequation 
\begin{equation*}
d^{Y}\left( f\left( x_{2}\right) -f\left( x_{1}\right) ,y_{1}\right) \geq
k_{1}\left\Vert f_{0}\left( x_{1}\right) -f_{0}\left( x_{2}\right)
\right\Vert _{X^{\ast }}+\psi \left( \left\Vert x_{1}-x_{2}\right\Vert
_{Y},r_{0}\right)
\end{equation*}%
for any $x_{1},x_{2}\in B_{r_{0}}^{X}\left( 0\right) $, where $Y$ is a
Banach space such that inclusion $X\subset Y$ is compact. Then the statement
of Theorem 2 is correct.
\end{corollary}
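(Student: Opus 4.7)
The proof adapts that of Lemma 1, with the additional $\psi$-term and the compact embedding $X \hookrightarrow Y$ doing the extra work during the preimage-extraction step. First I would replay the opening of Theorem 2's argument: Theorem 1 yields a subset $M_0^{\ast} \subseteq f(B_{r_0}^X(0))$ which is dense in a bodily subset of $X^{\ast}$, so it suffices to verify $\overline{M_0^{\ast}} \subseteq f(B_{r_0}^X(0))$. Given $y_0 \in \overline{M_0^{\ast}}$ and a Cauchy sequence $\{y_m\} \subset f(B_{r_0}^X(0))$ with $y_m \to y_0$, the first case of Lemma 1 (infinitely many $y_m$ lying in a single $f(x)$) is disposed of immediately by closedness of $f(x)$. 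In the nontrivial case I would select preimages $x_{m_\ell} \in B_{r_0}^X(0)$ with $y_{m_\ell} \in f(x_{m_\ell}) \setminus f(x_{m_{\ell+p}})$ and, as in Lemma 1, derive the bound $d(f(x_{m_\ell}) - f(x_{m_{\ell+p}}), y_{m_{\ell+p}}) < \sigma$.

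Inserting this into the \emph{new} inequality in place of (1.3) yields
$$k_1 \|f_0(x_{m_\ell}) - f_0(x_{m_{\ell+p}})\|_{X^{\ast}} + \psi(\|x_{m_\ell} - x_{m_{\ell+p}}\|_Y, r_0) < \sigma.$$
Under the natural sign/monotonicity convention on $\psi$ (nonnegative, nondecreasing in its first argument, vanishing at $0$), both summands vanish as $\sigma \to 0$. The first tells us $\{f_0(x_{m_\ell})\}$ is Cauchy in $X^{\ast}$, and then the openness of $f_0$ (condition (\textit{i})) yields a subsequence $\{x_{m_j}\}$ Cauchy in $X$ exactly as in Lemma 1. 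The second summand gives independently that $\{x_{m_\ell}\}$ is Cauchy in $Y$, which, combined with the boundedness of the sequence in $B_{r_0}^X(0)$, the compact embedding $X \hookrightarrow Y$, and the reflexivity of $X$, provides a weakly $X$-convergent subsequence whose strong $Y$-limit lies in the weakly closed ball $B_{r_0}^X(0)$. Either route produces a common limit $x_0 \in B_{r_0}^X(0)$.

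With $x_{m_j} \to x_0$ in $X$, continuity of the multivalued $f$ together with closedness (and convexity) of $f(x_0)$ forces $y_0 \in f(x_0) \subseteq f(B_{r_0}^X(0))$, and Corollary 2 of [28] closes the argument exactly as in Lemma 1. The main obstacle I expect is reconciling the two routes of convergence: the $\psi$-term together with the compact embedding is the new ingredient the Corollary advertises, yet to apply continuity of $f$ one still needs strong $X$-convergence, not only weak $X$-convergence plus strong $Y$-convergence. Making precise that the $\psi$-term genuinely \emph{substitutes} for, rather than merely supplements, the openness-based extraction in Lemma 1—or, alternatively, verifying that both routes yield the same limit and that strong $X$-convergence can indeed be upgraded in the presence of the compact embedding—is the delicate step that requires careful bookkeeping.
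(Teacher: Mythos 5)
There is a genuine gap, and it originates in the sign convention you impose on $\psi$. If, as you assume, $\psi$ is nonnegative and vanishes at $0$, then the modified inequality immediately implies (1.3) with the uniform constant $k_1$, and the Corollary collapses to a trivial special case of Lemma 1; the hypothesis that the embedding $X\subset Y$ is compact would then be pointless, and indeed your argument never makes essential use of it. The actual content of the Corollary is different: $\psi$ is meant to be a (possibly negative) defect term measured in the weaker norm of $Y$, which must be neutralized by compactness. This is visible from the twin statement, Corollary 2 with inequality (1.6), where the perturbation $\varphi$ sits on the left-hand side, and from the way the present Corollary is applied to the stationary Navier--Stokes problem, where (4.4) reads $\left\Vert f(u)-f(v)\right\Vert _{V^{\ast }}\geq 3\delta \left\Vert u-v\right\Vert _{V}-\tfrac{1}{2}\left( \left\Vert u\right\Vert _{L_{4}}+\left\Vert v\right\Vert _{L_{4}}\right) \left\Vert u-v\right\Vert _{L_{4}}$, i.e.\ the $\psi$-term is nonpositive and lives in $L_{4}$, into which $\overset{0}{W}{}_{2}^{1}$ embeds compactly. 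Your proposal therefore proves only the case in which nothing new happens, and misses the case the Corollary is designed for.

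The intended mechanism (the paper leaves it to the reader, but it is exactly the argument carried out after (4.5)) runs in the opposite direction from yours: one does not deduce $Y$-convergence from smallness of $\psi$; one obtains $Y$-convergence for free and uses it to remove $\psi$ from the inequality. Concretely, the preimages $x_{m_{\ell }}$ lie in the bounded set $B_{r_{0}}^{X}\left( 0\right) $ of the reflexive space $X$, so a subsequence converges weakly in $X$ and hence, by the compact embedding, strongly in $Y$; in particular it is Cauchy in $Y$, so $\psi \left( \left\Vert x_{m_{k}}-x_{m_{j}}\right\Vert _{Y},r_{0}\right) \rightarrow \psi \left( 0,r_{0}\right) =0$ (continuity of $\psi $ in its first argument with $\psi \left( 0,r_{0}\right) =0$ is the natural standing assumption, as for $\varphi $ in (1.6)). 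Only then does the modified inequality, together with the bound $d^{X^{\ast }}\left( f\left( x_{m_{k}}\right) -f\left( x_{m_{j}}\right) ,y_{m_{j}}\right) <\sigma $ obtained as in Lemma 1, yield that $\left\{ f_{0}\left( x_{m_{k}}\right) \right\} $ is Cauchy in $X^{\ast }$; condition (i) (openness and invertibility of $f_{0}$) then upgrades this to strong convergence of $\left\{ x_{m_{k}}\right\} $ in $X$ to some $x_{0}\in B_{r_{0}}^{X}\left( 0\right) $, and continuity of $f$ with closedness of $f\left( x_{0}\right) $ gives $y_{0}\in f\left( x_{0}\right) $. Thus the ``delicate step'' you flag at the end --- passing from weak $X$-convergence plus strong $Y$-convergence to strong $X$-convergence --- is resolved by $f_{0}$, not by the $\psi $-term, while the compactness is needed earlier, precisely to make the $\psi$-term harmless; your write-up leaves this mechanism, which is the whole point of the statement, unproved.
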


Now we can return to the proof of Theorem 2.

\begin{proof}
(Theorem 2 - continuation) For this proof we will use arguments similar to
the proof of the previous Lemma.\ Thus we assume, as in the proof of the
previous Lemma, $\left\{ y_{m}\right\} _{m\geq 1}\subset $ $f\left(
B_{r_{0}}^{X}\left( 0\right) \right) $ is a sequence such that $%
y_{m}\Longrightarrow y_{0}$ in $X^{\ast }$ at $m\longrightarrow \infty $. So
we have the following possible cases: \textit{(a)} there is an infinity part
of the sequence $\left\{ y_{m}\right\} _{m\geq 1}$, which belong the image $%
f\left( x\right) $ of $x\in B_{r_{0}}^{X}\left( 0\right) $ for an element $x$%
; \textit{(b) }there is a subsequence $\left\{ y_{m_{\ell }}\right\} _{\ell
\geq 1}$ of the sequence $\left\{ y_{m}\right\} _{m\geq 1}$ such that $%
x_{m_{\ell }}\in f^{-1}\left( y_{m_{\ell }}\right) \cap B_{r_{0}}^{X}\left(
0\right) $ and $y_{m_{\ell }}\in f\left( x_{m_{\ell }}\right) -f\left(
x_{m_{\ell +p}}\right) $, $y_{m_{\ell +p}}\in f\left( x_{m_{\ell +p}}\right)
-f\left( x_{m_{\ell }}\right) $.

After similar reasons of the proof of Lemma we obtain again that the case 
\textit{(a)} is obvious and in the case \textit{(b)} there is a subsequence
satisfying similar properties as in the proof of Lemma\textit{. }Therefore
we have - the defined above subsequence there are $x_{m_{\ell }}\in
f^{-1}\left( y_{m_{\ell }}\right) \cap B_{r_{0}}^{X}\left( 0\right) $, $\ell
\longrightarrow \infty $ and for $m_{\ell }>m\left( \sigma \right) $
satisfying of the second case. In addition the inequation $d^{X^{\ast
}}\left( f\left( x_{m_{\ell }}\right) -f\left( x_{m_{\ell +p}}\right)
,y_{m_{\ell +p}}\right) <\sigma $ holds for any $m_{\ell }$, $m_{\ell +p}$
by virtue of convexity of $f\left( x_{m_{\ell }}\right) $, $f\left(
x_{m_{\ell +p}}\right) $, by fundamentality of the sequence $\left\{
y_{m}\right\} $ and continuity of the mapping $f$. Thus we can choose an
element $\widetilde{x}\in B_{r_{0}}^{X}\left( 0\right) $ and $\delta \left(
\sigma \right) >0$ such that $B_{\delta \left( \sigma \right) }^{X}\left( 
\widetilde{x}\right) \subset B_{r_{0}}^{X}\left( 0\right) $ and $f\left(
B_{\delta \left( \sigma \right) }^{X}\left( \widetilde{x}\right) \right)
\subseteq B_{\sigma }^{X^{\ast }}\left( f\left( x_{m\left( \sigma \right)
}\right) \right) $ with use the continuity of the mapping $f$ again.
Moreover $\left\{ y_{m}\right\} _{m\geq m\left( \sigma \right) }\subset
f\left( B_{\delta \left( \sigma \right) }^{X}\left( \widetilde{x}\right)
\right) \cap B_{\sigma }^{X^{\ast }}\left( y_{m\left( \sigma \right)
}\right) $ and the condition (\textit{iii}) of Theorem 2 is fulfilled on
this neighborhood. Consequently we can complete the proof of Theorem 2 by
same way as in the proof of Lemma with using the condition (\textit{iii}) in
the determined here neighborhood.
\end{proof}

Now we can prove the following result with using Theorem 2, analogously as
Theorem 1 is used in the proof of Theorem 2.

\begin{theorem}
Let $X$, $Y$ be Banach spaces, furthermore $Y$ is a reflexive and $X^{\ast }$%
, $Y^{\ast }$ are separable spaces. We assume that $f\equiv f_{0}+\
f_{1}:D\left( f\right) \subseteq X\longrightarrow Y$ be a bounded
multivalued mapping where $f_{i}:$ $D\left( f_{i}\right) \subseteq
X\longrightarrow Y$ ($i=0,1$) are such mappings that on a closed
neighborhood $U\left( x_{0}\right) \subseteq D\left( f\right) $ of an
element $x_{0}$ the conditions (i) and (iii) of Theorem 2 are fulfilled with
respect to the neighborhood $U\left( x_{0}\right) $ and $Y$. Assume the
following condition is fulfilled.

(ii') The mapping $f_{1}\in C_{cm}^{0}$ such that the image $f_{1}\left(
x\right) $ is a convex closed set for every $x\in U\left( x_{0}\right)
\subset X$ and the inequation 
\begin{equation*}
d^{Y}\left( f\left( x\right) ;f\left( x_{0}\right) \right) \leq \mu \left(
\left\Vert x-x_{0}\right\Vert _{X}\right) ,\quad \forall x\in U\left(
x_{0}\right) \subset X;\text{ }0\in f_{1}\left( x_{0}\right) ,
\end{equation*}%
holds\footnote{%
As is known the Hausdorff distance for the subsets $G_{1}$, $G_{2}$ of $Y$ is%
\begin{equation*}
d^{Y}\left( G_{1};G_{2}\right) \equiv \max \left\{ \sup \left\{ d^{Y}\left(
G_{1};y_{2}\right) \left\vert ~y_{2}\in G_{2}\right. \right\} ,\right. 
\end{equation*}%
\begin{equation*}
\left. \sup ~\left\{ d^{Y}\left( y_{1};G_{2}\right) \left\vert ~y_{1}\in
G_{1}~\right. \right\} \right\} .
\end{equation*}%
}, furthermore there is a mapping $g:D\left( g\right) \subseteq X\rightarrow
Y^{\ast }$, $U\left( x_{0}\right) \subseteq D\left( f\right) \cap D\left(
g\right) $, $g^{-1}\in C^{0}$, $g\left( U\left( x_{0}\right) \right) \equiv
B_{1}^{Y^{\ast }}\left( 0\right) $, $g\left( x_{0}\right) =0$ $\&$ $\exists
G\subset U\left( x_{0}\right) $, $g\left( G\right) \equiv S_{1}^{Y^{\ast
}}\left( 0\right) $, $G\subseteq g^{-1}\left( S_{1}^{Y^{\ast }}\left(
0\right) \right) $ and a number $\delta _{0}>0$ such that 
\begin{equation*}
\forall x\in U\left( x_{0}\right) \ \ \exists y\in f\left( x\right) ,\ \inf
\left\{ \left\langle y,y^{\ast }\right\rangle \left\vert \ y^{\ast }\in
g\left( x\right) \right. \right\} \geq k\sup \left\{ \left\langle
f_{0}\left( x\right) ,\ y^{\ast }\right\rangle \left\vert \ y^{\ast }\in
g\left( x\right) \right. \right\} 
\end{equation*}%
\begin{equation}
\inf \left\{ \left\langle f_{0}\left( x\right) -f_{0}\left( x_{0}\right)
,g\left( x\right) \right\rangle \right\} \geq \nu \left( \left\Vert
x-x_{0}\right\Vert _{X}\right) ,\text{ a.e. }x\in U\left( x_{0}\right) , 
\tag{1.4}
\end{equation}%
hold, moreover $\nu \left( \left\Vert x-x_{0}\right\Vert \right) \geq \delta
_{0}>0$ is correct for $\forall x\in G_{1}\subseteq G,$ $cl\ g\left(
G_{1}\right) \supseteq S_{1}^{Y^{\ast }}\left( 0\right) $, where the
functions $\mu ,\nu \in C^{0}$ are such as in Theorem 1 and $k>0$ is a
constant.

Then $f\left( U\left( x_{0}\right) \right) $ is a bodily subset of $Y$,
which contains the following bodily subset 
\begin{equation}
M_{0}\equiv \left\{ y\in Y\left\vert \ \sup \left\{ \left\langle y,g\left(
x\right) \right\rangle \right\} \leq \inf \left\{ \left\langle y_{x},g\left(
x\right) \right\rangle \right\} ,\right. \ \forall x\in G,\ \exists y_{x}\in
f\left( x\right) \right\} .  \tag{1.5}
\end{equation}
\end{theorem}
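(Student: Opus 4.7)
The plan is to mirror the two-stage strategy used in the proof of Theorem 2, but with Theorem 1 playing the role that Lemma 1 of [28] played there. In the first stage I would verify the hypotheses $\alpha$, $\beta_1$, $\beta_2$ of Theorem 1 for $f = f_0 + f_1$ on $U(x_0)$, taking as the auxiliary in $\beta_2$ the given $g$, with $U_0 = U(x_0) \cap D(g)$ and $U_1 = G$. Hypothesis $\alpha$ is inherited from $f_0 \in C^0$ and $f_1 \in C^0_{cm}$ with convex closed values, since translation by a single-valued continuous map preserves both lower semicontinuity and convex closed-valuedness. Hypothesis $\beta_1$ is exactly the Hausdorff bound $d^Y(f(x); f(x_0)) \leq \mu(\|x-x_0\|_X)$ listed in (ii'). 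For $\beta_2$, the sign inequality $\inf \langle y, y^*\rangle \geq k\sup \langle f_0(x), y^*\rangle$ (for $y^* \in g(x)$) must be combined with the coercivity (1.4) on the translate $f_0(x) - f_0(x_0)$ and then converted, via the substitution $x = g^{-1}(y^*)$ for $y^* \in g(U(x_0)) \equiv B_1^{Y^*}(0)$, into the pointwise lower bound $\langle y - y_0, y^*\rangle \geq \nu(\|x-x_0\|_X)$ required by Theorem 1. Once these are checked, Theorem 1 delivers a subset $M_0^* \subseteq f(U(x_0))$ which is everywhere dense in the bodily set $M_0$ of (1.5).

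In the second stage I would upgrade denseness to inclusion exactly as in the proof of Lemma 1 (and the continuation of the proof of Theorem 2). Take $\{y_m\} \subset f(U(x_0))$ with $y_m \to y_0$ in $Y$; the separability of $Y^*$ and the reflexivity of $Y$ validate the sequential arguments. Choose preimages $x_m$ with $y_m \in f(x_m)$, and distinguish the two cases already identified in the proof of Theorem 2: either an infinite subfamily lies in a single $f(x)$, in which case closed-valuedness of $f(x)$ finishes the argument, or one must extract a subsequence with $y_{m_\ell} \in f(x_{m_\ell}) \setminus f(x_{m_{\ell+p}})$. Continuity of $f$ and fundamentality of $\{y_m\}$ then localise the $x_{m_\ell}$, for large $\ell$, inside some $B_{\delta(\sigma)}^X(\widetilde{x}) \subset U(x_0)$ on which condition (iii) applies. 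Applying the variant of (1.3) adapted to $Y$ to the estimate $d^Y(f(x_{m_\ell}) - f(x_{m_{\ell+p}}), y_{m_{\ell+p}}) < \sigma$ shows that $\{f_0(x_{m_j})\}$ is Cauchy in $Y$, so the openness of $f_0$ together with the lower semicontinuity of $f_0^{-1}$ (recalled at the start of the proof of Theorem 2) yields a Cauchy subsequence $\{x_{m_j}\}$ whose limit lies in the closed set $U(x_0)$; continuity and closed-valuedness of $f$ then give $y_0 \in f(U(x_0))$.

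The main obstacle I expect is the verification of $\beta_2$: one must coordinate the $g$-parametrization of $S_1^{Y^*}(0)$, which in (ii') is available only through the relations $g(G) \equiv S_1^{Y^*}(0)$ and $\mathrm{cl}\, g(G_1) \supseteq S_1^{Y^*}(0)$, with the two a-priori separate inequalities in (ii')---one controlling $\langle y, g(x)\rangle$ in terms of $\langle f_0(x), g(x)\rangle$, the other delivering the positive lower bound $\nu$ through the translate $f_0(x) - f_0(x_0)$. Because the coercivity $\nu \geq \delta_0$ is only postulated a.e.\ on $G_1 \subseteq G$ with $\mathrm{cl}\, g(G_1) \supseteq S_1^{Y^*}(0)$, a density/closure argument is needed to extend it to the full unit sphere in $Y^*$ as required by Theorem 1. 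Once this reduction is carried out cleanly, the remainder is essentially a transcription of the proof of Theorem 2.
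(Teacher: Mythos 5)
Your plan is essentially the paper's own: the paper offers no detailed proof of Theorem 3, only the remark that it follows by running the proof of Theorem 2 again (density of the image in $M_{0}$ obtained from Theorem 1 via the mapping $g$, then closedness of that dense subset via condition (iii) exactly as in Lemma 1 and the continuation of Theorem 2's proof). Your two-stage sketch, including the flagged closure argument needed to pass from $G_{1}$ with $cl\,g(G_{1})\supseteq S_{1}^{Y^{\ast}}(0)$ to the full sphere in the verification of $\beta_{2}$, matches that intended argument.
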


The following statement immediately follows from Theorem 3, which is
analogously to Corollary 1.

\begin{corollary}
Let $Z$ be a Banach space such that $X\subset Z$ is compact. Then if all
conditions of Theorem 3 are fulfilled ecxept of the condition (iii), which
is fulfilled in the following form then Theorem 3 remain true

($iii\prime $) Almost each $\widetilde{x}\in intU\left( x_{0}\right) $
possess a neighborhood $V_{\varepsilon }\left( \widetilde{x}\right) $, $%
\varepsilon \geq \varepsilon _{0}>0$ such that for $\forall x_{1},x_{2}$ $%
\in V_{\varepsilon }\left( \widetilde{x}\right) \cap U\left( x_{0}\right) $
there is $y_{1}\in f\left( x_{1}\right) -f\left( x_{2}\right) $ such that\
the following inequality is valid 
\begin{equation}
d^{Y}\left( f\left( x_{1}\right) -f\left( x_{2}\right) ;y_{1}\right)
+\varphi \left( \left\Vert x_{1}-x_{2}\right\Vert _{Z};\varepsilon \right)
\geq k_{1}\left\Vert f_{0}\left( x_{1}\right) -f_{0}\left( x_{2}\right)
\right\Vert _{Y},  \tag{1.6}
\end{equation}%
for some contnious function $\varphi :R_{+}^{1}\longrightarrow R^{1}$ such
that $\varphi \left( 0;\varepsilon \right) =0$, $k_{1}>0$ is a constant.
\end{corollary}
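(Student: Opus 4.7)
The plan is to run the argument of Theorem 3 (equivalently, the continuation of Lemma 1 used in the proof of Theorem 2) virtually unchanged, replacing inequality (1.3) by (1.6) and using the compact embedding $X\subset Z$ to absorb the extra summand $\varphi(\|x_1-x_2\|_Z;\varepsilon)$. By the part of Theorem 3 that does not invoke (iii), the image $f(U(x_0))$ already contains a subset $M_0^{\ast}$ dense in the bodily set $M_0$ of (1.5); what is left is to prove that $M_0^{\ast}$ is closed in $Y$, i.e.\ that $\overline{M_0^{\ast}}\subseteq f(U(x_0))$.

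First I would fix $y_0\in\overline{M_0^{\ast}}$ and choose $\{y_m\}\subset f(U(x_0))$ with $y_m\to y_0$ in $Y$. Exactly as in the proof of Theorem 3, I would split into case (a), where an infinite part of the sequence lies in a single $f(x)$ (trivial, since $f(x)$ is closed), and case (b), where one extracts a subsequence $\{y_{m_\ell}\}$ with preimages $x_{m_\ell}\in f^{-1}(y_{m_\ell})\cap U(x_0)$ and $y_{m_\ell}\in f(x_{m_\ell})-f(x_{m_{\ell+p}})$, $y_{m_{\ell+p}}\in f(x_{m_{\ell+p}})-f(x_{m_\ell})$. The convexity and closedness of the values of $f$, the Cauchy property of $\{y_m\}$ and continuity of $f$ then yield, for any prescribed $\sigma>0$ and $\ell$ large,
\begin{equation*}
d^{Y}\bigl(f(x_{m_\ell})-f(x_{m_{\ell+p}});\,y_{m_{\ell+p}}\bigr)<\sigma,\qquad p\geq 1,
\end{equation*}
and one can localize the subsequence into a neighborhood $V_{\varepsilon}(\tilde x)\cap U(x_0)$ on which ($iii'$) applies, exactly as at the end of the proof of Theorem 3.

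The new ingredient enters at this step. Since $\{x_{m_\ell}\}$ is bounded in $X$ and the embedding $X\subset Z$ is compact, I would extract a further subsequence, still denoted $\{x_{m_\ell}\}$, which is Cauchy in $Z$. Then for every $\eta>0$ one has $\|x_{m_\ell}-x_{m_{\ell+p}}\|_Z<\eta$ eventually, and ($iii'$) combined with the previous display gives
\begin{equation*}
k_1\,\|f_0(x_{m_\ell})-f_0(x_{m_{\ell+p}})\|_{Y}\leq \sigma+\varphi(\eta;\varepsilon).
\end{equation*}
Since $\varphi(\cdot;\varepsilon)$ is continuous with $\varphi(0;\varepsilon)=0$, the right-hand side can be made arbitrarily small, so $\{f_0(x_{m_\ell})\}$ is Cauchy in $Y$. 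From here the proof of Theorem 3 carries over verbatim: condition (i), which by Lemma 1 of [28] makes $f_0^{-1}$ lower semi-continuous, delivers a sub-subsequence $x_{m_j}\to\hat x$ in $X$ with $\hat x\in U(x_0)$, and continuity of $f$ together with closedness of its values forces $y_0\in f(\hat x)\subset f(U(x_0))$. Hence $M_0^{\ast}$ is closed, $M_0\subseteq\overline{M_0^{\ast}}=M_0^{\ast}\subseteq f(U(x_0))$, and Corollary 2 follows.

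The principal obstacle is precisely the additional term $\varphi(\|x_1-x_2\|_Z;\varepsilon)$ in ($iii'$): it prevents one from concluding Cauchyness of $\{f_0(x_{m_\ell})\}$ directly from Cauchyness of $\{y_m\}$, which was the mechanism used in Lemma 1 and in Theorem 3 under (iii). The compactness of $X\subset Z$ is exactly the hypothesis needed to neutralize this remainder, via a $Z$-Cauchy subsequence along which $\varphi$ vanishes; once both summands in (1.6) are small the rest of the argument is identical to that of Theorem 3, so no additional structural work is required.
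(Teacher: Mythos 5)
Your proposal is correct and is essentially the argument the paper intends: the paper offers no separate proof, asserting the corollary follows "immediately" from Theorem 3 analogously to Corollary 1, and your write-up supplies exactly that — rerun the Lemma 1/Theorem 3 closedness argument, using the compact embedding $X\subset Z$ to extract a $Z$-Cauchy subsequence so the extra term $\varphi\left( \left\Vert x_{1}-x_{2}\right\Vert _{Z};\varepsilon \right)$ vanishes and $\left\{ f_{0}\left( x_{m_{\ell }}\right) \right\}$ is still Cauchy. (Only a trivial slip: the lower semi-continuity of $f_{0}^{-1}$ is attributed in the paper to condition (i) via [32, 2, 3], not to Lemma 1 of [28].)
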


\begin{corollary}
Let the conditions of Theorem 2 (or of Theorem 3) are fulfilled. In
addition, if $f_{0}$ is the bijective mapping and $f_{1}$ is single-valued
mapping then $f$ is the bijective mapping.
\end{corollary}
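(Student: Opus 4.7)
The plan is to derive surjectivity directly from Theorem~2 (or Theorem~3) and to establish injectivity from condition (iii) combined with the bijectivity of $f_{0}$. Since $f_{0}$ is single-valued (being bijective) and $f_{1}$ is single-valued by hypothesis, the sum $f=f_{0}+f_{1}$ is a single-valued continuous mapping, and Theorem~2 applied to this $f$ already gives that $f(B_{r_{0}}^{X}(0))$ is a bodily subset of $X^{\ast}$ containing $M^{\ast}$, which is the surjective half of the claim.

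For a single-valued $f$, condition (iii) specializes to the local lower bound
\begin{equation*}
\|f(x_{1})-f(x_{2})\|_{X^{\ast}}\ge k_{1}(\widetilde{x})\,\|f_{0}(x_{1})-f_{0}(x_{2})\|_{X^{\ast}},\qquad x_{1},x_{2}\in V_{\varepsilon}(\widetilde{x})\cap B_{r_{0}}^{X}(0),
\end{equation*}
valid at almost every $\widetilde{x}\in\operatorname{int}B_{r_{0}}^{X}(0)$. If $f(x_{1})=f(x_{2})$ and $x_{1},x_{2}$ happen to lie in a common good neighborhood $V_{\varepsilon}(\widetilde{x})$, the inequality forces $f_{0}(x_{1})=f_{0}(x_{2})$, and injectivity of $f_{0}$ yields $x_{1}=x_{2}$; this is local injectivity.

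To globalize, I would rerun the preimage-extraction argument of the Lemma with the constant sequence $y_{m}\equiv f(x_{1})=f(x_{2})$ and alternating preimages $x_{1},x_{2},x_{1},x_{2},\ldots$. The Cauchy estimate $\|f_{0}(x_{m_{\ell}})-f_{0}(x_{m_{\ell+p}})\|_{X^{\ast}}<k\sigma$ from the proof of the Lemma then reads $\|f_{0}(x_{1})-f_{0}(x_{2})\|_{X^{\ast}}<k\sigma$ for every $\sigma>0$, so $f_{0}(x_{1})=f_{0}(x_{2})$, and bijectivity of $f_{0}$ forces $x_{1}=x_{2}$, giving global injectivity.

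The main obstacle is the \emph{almost every} qualifier in (iii): a priori $x_{1},x_{2}$ need not lie in a common good neighborhood. This is handled by the density of the good set in $\operatorname{int}B_{r_{0}}^{X}(0)$, the uniform lower bound $\varepsilon\ge\varepsilon_{0}>0$ on the radii, and the convexity of $B_{r_{0}}^{X}(0)$, which together with the continuity of $f$ allow one to localize the alternating preimage sequence inside a single good $V_{\varepsilon}(\widetilde{x})$ after passing to a convergent subsequence. Together with the surjectivity already established, this yields bijectivity of $f$; the case under Theorem~3's hypotheses is verbatim the same.
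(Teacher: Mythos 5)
Your core argument is exactly the paper's: the paper's entire proof is the single remark that the claim ``follows from the condition (iii), essentially,'' i.e.\ if $f(x_{1})=f(x_{2})$ then (iii) (with $f$ single-valued, since $f_{0}$ and $f_{1}$ are) gives $0\geq k_{1}\left\Vert f_{0}(x_{1})-f_{0}(x_{2})\right\Vert$, so $f_{0}(x_{1})=f_{0}(x_{2})$ and injectivity of $f_{0}$ forces $x_{1}=x_{2}$, while the image statement of Theorem 2 (resp.\ Theorem 3, or Corollary 4 in the global case) supplies the ``onto'' half. So your first two paragraphs reproduce the intended proof with the details filled in.

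The part that does not work is your globalization device. The alternating sequence $x_{1},x_{2},x_{1},x_{2},\dots$ has only the two cluster points $x_{1}$ and $x_{2}$; no passage to a subsequence, density of the good points $\widetilde{x}$, or convexity of the ball can place both fixed points inside a single $V_{\varepsilon}(\widetilde{x})$ unless $\left\Vert x_{1}-x_{2}\right\Vert _{X}$ is already comparable to $\varepsilon$ (the hypothesis only bounds $\varepsilon$ from below by $\varepsilon_{0}$, not from above). The Cauchy estimate $\left\Vert f_{0}(x_{m_{\ell}})-f_{0}(x_{m_{\ell+p}})\right\Vert _{X^{\ast}}<k\sigma$ that you invoke is obtained in the Lemma precisely because there (1.3) is assumed for \emph{all} pairs in the ball, and in the proof of Theorem 2 only after the preimages of a genuinely convergent sequence are re-chosen inside a shrinking ball $B_{\delta(\sigma)}^{X}(\widetilde{x})$ --- a re-choice you cannot make when the two preimages are prescribed distinct points. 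Thus with the local form of (iii) your argument yields injectivity only at scale $\varepsilon_{0}$ (and full injectivity under the Lemma's global form of (1.3), where no sequence trick is needed at all); the step from local to global injectivity for two far-apart preimages is a genuine gap in your write-up --- one that, to be fair, the paper's one-line proof glosses over as well. A minor point: ``bijective'' here can only mean a bijection of the ball (or neighborhood) onto the image described in Theorem 2/3, or onto the whole space under the global hypotheses of Corollary 4; your reading is consistent with that, but it is worth stating explicitly.
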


The proof follows from the condition \textit{(iii), }essentially.

\begin{corollary}
Let $f\equiv I+\ f_{1}:D\left( f\right) \subseteq X\longrightarrow Y\equiv X$
be a mapping, where $X$ and $f_{1}:D\left( f_{1}\right) \subseteq $ $%
X\longrightarrow X$ be such as in Theorem 2 ($Ix\equiv $ $f_{0}\left(
x\right) =x$).

We assume that all conditions of Theorem 2 are fulfilled on the closed ball $%
B_{r_{0}}^{X}\left( 0\right) \subseteq D\left( f\right) $, in this case the
condition (ii) is fulfilled with duality mapping $J:X\rightleftarrows
X^{\ast }$, i.e. 
\begin{equation*}
\sup \left\{ \left\Vert y\right\Vert _{X}\left\vert ~y\in f\left( x\right)
\right. \right\} \leq \mu \left( \left\Vert x\right\Vert _{X}\right) ,\
\forall x\in B_{r_{0}}^{X}\left( 0\right) ,
\end{equation*}%
\begin{equation*}
\forall x\in B_{r_{0}}^{X}\left( 0\right) ,\ \exists y\in f\left( x\right)
\quad \left\langle y,J\left( x\right) \right\rangle \geq k\left\langle
f_{0}\left( x\right) ,J\left( x\right) \right\rangle \equiv k\left\langle
x,J\left( x\right) \right\rangle
\end{equation*}%
where $k>0\ $be a constant. Then $f\left( B_{r_{0}}^{X}\left( 0\right)
\right) $ is a bodily subset of $X$.

Furthermore if the conditions of this Corollary are fulfilled on $X$, then $%
f $ is a surjection, and if in addition $f_{1}$ is single-valued mapping
then the equation $f\left( x\right) =y$ is uniquely solvable for any $y\in X$%
.
\end{corollary}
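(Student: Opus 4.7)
The plan is to deduce Corollary 4 as a direct specialization of Theorem 3 (together with Corollary 3) to the self-dual situation $Y \equiv X$, with the duality mapping $J\colon X \to X^{\ast}$ (suitably normalized) playing the role of the auxiliary mapping $g$. Under the standing hypotheses that $X$ and $X^{\ast}$ are both reflexive and strictly convex and that $X^{\ast}$ is separable, $J$ is single-valued, demi-continuous, bijective with continuous inverse, and satisfies $J(0)=0$, $\|J(x)\|_{X^{\ast}} = \|x\|_{X}$, and $\langle J(x),x\rangle = \|x\|_{X}^{2}$. After the standard rescaling that identifies $J(B_{r_{0}}^{X}(0))$ with $B_{1}^{X^{\ast}}(0)$, this $J$ meets the set-theoretic requirements demanded of $g$ in hypothesis $(ii')$ of Theorem 3.

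The verification of the remaining hypotheses of Theorem 3 on $B_{r_{0}}^{X}(0)$ is then straightforward. Condition (i) is immediate since $f_{0} = I$ is a continuous open bijection with $f_{0}(0) = 0$. Condition (iii) is assumed in the statement. For condition $(ii')$, the pointwise boundedness $\|y\|_{X} \leq \mu(\|x\|_{X})$ together with $0 \in f(0)$ yields the Hausdorff bound $d^{X}(f(x),f(0)) \leq \mu(\|x\|_{X})$ (absorbing an irrelevant constant into $\mu$); the inner-product assumption of the Corollary furnishes, for some $y \in f(x)$,
\[
\langle y,J(x)\rangle \geq k\langle f_{0}(x),J(x)\rangle = k\|x\|_{X}^{2};
\]
and the coercivity term becomes $\langle f_{0}(x)-f_{0}(0), J(x)\rangle = \|x\|_{X}^{2}$, so one may take $\nu(r) = r^{2}$, which manifestly satisfies $\nu(r_{0}) \geq \delta_{0} > 0$. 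Theorem 3 then yields that $f(B_{r_{0}}^{X}(0))$ is a bodily subset of $X$ containing a subset of the explicitly described form.

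For the global surjectivity claim, I would repeat the local argument on every ball $B_{r}^{X}(0)$ and exploit the coercivity to show that the bodily subsets of $f(B_{r}^{X}(0))$ produced by Theorem 3 grow to exhaust $X$ as $r \to \infty$. Concretely, any prescribed $y \in X$ will belong to the bodily subset associated with $B_{r}^{X}(0)$ as soon as $r$ is large enough, since the coercive lower bound $\langle y_{x}, J(x)\rangle \geq k\|x\|_{X}^{2}$ dominates the trivial upper bound $|\langle y, J(x)\rangle| \leq \|y\|_{X}\|x\|_{X}$ once $kr > \|y\|_{X}$. The unique solvability assertion under the additional single-valuedness of $f_{1}$ is then an immediate application of Corollary 3 to the bijective $f_{0} = I$.

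The step I expect to be the main technical obstacle is the faithful translation of the duality-mapping setup into the $g$-formalism of Theorem 3, specifically the verification after normalization of the set-theoretic identities $cl(g(U_{0})) \equiv B_{1}^{Y^{\ast}}(0)$ and $cl(g(U_{1})) \supseteq S_{1}^{Y^{\ast}}(0)$, where $U_{0}$ and $U_{1}$ are built from $B_{r_{0}}^{X}(0)$ and $S_{r_{0}}^{X}(0)$. These identities should follow from the bijectivity and norm-preservation of $J$ combined with the strict convexity of both $X$ and $X^{\ast}$; once this bookkeeping is in place, the rest is a direct appeal to Theorem 3 and Corollary 3.
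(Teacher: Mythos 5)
Your proposal is correct and follows essentially the same route as the paper, which disposes of this corollary in a single line by observing that the hypotheses of Theorem 2 are met when the dual pairing is realized through the duality mapping $J$ and then invoking that theorem; your verification via Theorem 3 with $g=J$ (after normalization), together with Corollary 3 for the uniqueness claim, is just a more explicit rendering of that same reduction. The extra details you supply — the Hausdorff bound with the constant absorbed into $\mu$, the choice $\nu(r)=r^{2}$, and the exhaustion argument showing $\langle y_{x},J(x)\rangle \geq k\|x\|_{X}^{2}$ eventually dominates $\|y\|_{X}\|x\|_{X}$ so that every $y$ lies in the bodily set for large $r$ — are exactly what the paper leaves implicit, so no substantive divergence or gap is present.
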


For the proof enough to note that conditions of Theorem 2 are fulfilled
under the conditions of Corollary. Hence we deduce Corollary by applying
Theorem 2.

The following fixed-point theorem follows from Corollary 4.

\begin{corollary}
Let $X$ be a reflexive separable Banach space and $f_{1}:D\left(
f_{1}\right) \subseteq X\longrightarrow X$ be a bounded multivalued
continuous mapping, in general. Let on a closed ball $B_{r_{0}}^{X}\left(
x_{0}\right) \subseteq D\left( f_{1}\right) $ of an element $x_{0}\in
D\left( f_{1}\right) $ the mapping $f\equiv I-f_{1}$ satisfies all
conditions of Corollary 4 with a duality mapping $J:X\rightleftarrows
X^{\ast }$, i.e. in this case the inequality corresponding to (1.4) has the
form%
\begin{equation*}
\forall x\in B_{r_{0}}^{X}\left( x_{0}\right) \ \exists y\in f\left(
x\right) ,\quad \left\langle y,J\left( x-x_{0}\right) \right\rangle \geq \nu
\left( \left\Vert x-x_{0}\right\Vert _{X}\right) \left\Vert
x-x_{0}\right\Vert _{X}.
\end{equation*}

Then the mapping $f_{1}$ possess a fixed point on the ball $%
B_{r_{0}}^{X}\left( x_{0}\right) $.
\end{corollary}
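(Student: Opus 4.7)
The plan is to apply Corollary 4 to $f\equiv I-f_{1}$ on the ball $B_{r_{0}}^{X}(x_{0})$ and to locate $0$ in the bodily subset whose existence it guarantees. Indeed a fixed point of $f_{1}$ is precisely a preimage of $0$ under $f$, because $0\in f(\bar{x})=\bar{x}-f_{1}(\bar{x})$ is equivalent to $\bar{x}\in f_{1}(\bar{x})$, so the whole corollary reduces to the inclusion $0\in f(B_{r_{0}}^{X}(x_{0}))$.

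After the translation $\xi\equiv x-x_{0}$, $\tilde{f}_{1}(\xi)\equiv f_{1}(\xi+x_{0})-x_{0}$, the mapping $f$ becomes $\tilde{f}\equiv I-\tilde{f}_{1}$ on $B_{r_{0}}^{X}(0)$, and the coercivity hypothesis reads: for every $\xi\in B_{r_{0}}^{X}(0)$ there is $y\in\tilde{f}(\xi)$ with $\langle y,J(\xi)\rangle\geq\nu(\Vert\xi\Vert)\Vert\xi\Vert$, with $\nu(r_{0})\geq\delta_{0}>0$. The remaining hypotheses of Corollary 4 are inherited from those of Theorem 2: $f_{0}=I$ is continuous, open, and bijective; boundedness of $f_{1}$ supplies the majorant $\mu$; and the local comparison condition (\textit{iii}) is part of the standing assumption. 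Consequently Corollary 4, through Theorem 2 with the duality mapping $J$ playing the role of $g$, produces the bodily subset
\begin{equation*}
M^{\ast}=\left\{\,y\in X\,\left\vert\ \langle J(\xi),y\rangle\leq\langle J(\xi),y_{\xi}\rangle,\ \exists\,y_{\xi}\in\tilde{f}(\xi),\ \forall\xi\in S_{r_{0}}^{X}(0)\right.\right\}\subseteq\tilde{f}(B_{r_{0}}^{X}(0)).
\end{equation*}

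To check $0\in M^{\ast}$ I need only $0\leq\langle J(\xi),y_{\xi}\rangle$ for some $y_{\xi}\in\tilde{f}(\xi)$ at each $\xi\in S_{r_{0}}^{X}(0)$, and the hypothesis supplies exactly such a $y_{\xi}$ with $\langle y_{\xi},J(\xi)\rangle\geq\nu(r_{0})r_{0}\geq\delta_{0}r_{0}>0$. Hence $0\in\tilde{f}(\bar{\xi})$ for some $\bar{\xi}\in B_{r_{0}}^{X}(0)$, and $\bar{x}\equiv\bar{\xi}+x_{0}\in B_{r_{0}}^{X}(x_{0})$ is a fixed point of $f_{1}$. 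The only genuine point of friction, rather than the fixed-point step itself, is the bookkeeping that the translated $\tilde{f}$ actually satisfies every hypothesis of Corollary 4 in the shifted coordinates; since the translation is an isometry and $f_{0}=I$ is translation-equivariant up to the additive constant $x_{0}$, this amounts to rereading each of (\textit{i})--(\textit{iii}) in the new coordinates, which is routine.
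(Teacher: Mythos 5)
Your argument is correct and follows the same route the paper intends: the paper derives this fixed-point result directly from Corollary 4 (i.e.\ from Theorem 2 with $g=J$, using $\left\langle x,J(x)\right\rangle =\left\Vert x\right\Vert _{X}^{2}$), and you simply spell out the reduction of ``fixed point of $f_{1}$'' to ``$0\in f\left( B_{r_{0}}^{X}\left( x_{0}\right) \right) $'' together with the check that the coercivity on the sphere places $0$ in the bodily set $M^{\ast }$. This is exactly the paper's (largely implicit) proof, with the translation to $x_{0}=0$ and the membership verification made explicit.
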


It should be noted that in this case we can select the duality mapping such
that $\left\Vert J\left( x\right) \right\Vert _{X^{\ast }}\equiv \left\Vert
x\right\Vert _{X}$ and $\left\langle x,J\left( x\right) \right\rangle \equiv
\left\Vert x\right\Vert _{X}^{2}$ for any $x\in B_{r_{0}}^{X}\left( 0\right) 
$ (see, for example the corresponding corollary of [28].)

It easily see that condition (\textit{iii}) under other conditions allow to
prove completeness the image of the considered mapping that follows from the
proof of Theorem 2.

Now, we will conduct some other sufficient conditions under which the image
of a bounded continuous mapping is a closed subset.

\begin{lemma}
Let $X,Y$ be a reflexive Banach spaces, and let $f:D\left( f\right)
\subseteq X\longrightarrow Y$ be a bounded continuous mapping such that $%
f\left( G\right) $ is weakly closed if $G\subseteq D\left( f\right) $ is a
closed convex subset of $X$. Then $f\left( G\right) $ is a closed subset of $%
Y$ if $G\subseteq D\left( f\right) $ is a bounded closed convex subset of $X$%
.
\end{lemma}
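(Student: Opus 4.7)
The plan is to combine two elementary facts: (a) in a reflexive Banach space, every bounded closed convex set is weakly compact; and (b) a weakly closed subset of a Banach space is automatically norm closed, because norm convergence implies weak convergence.

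First I would take an arbitrary sequence $\{y_{n}\}_{n\ge 1}\subseteq f(G)$ with $y_{n}\to y_{0}$ in norm, and write $y_{n}=f(x_{n})$ with $x_{n}\in G$. Since $G$ is bounded, closed, and convex in the reflexive space $X$, the Banach--Alaoglu/Eberlein--\v{S}mulian machinery guarantees that $G$ is weakly compact, so some subsequence satisfies $x_{n_{k}}\rightharpoonup x_{0}\in G$. Note that this step does not by itself produce $y_{0}\in f(G)$, because $f$ is only assumed to be (strongly) continuous, not weak-to-weak continuous; the weak limit $x_{0}$ serves only as a candidate preimage, and one cannot identify $f(x_{n_{k}})$ with $f(x_{0})$ through a direct continuity argument.

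The closure is then delivered by the hypothesis. Strong convergence $y_{n}\to y_{0}$ in $Y$ implies $y_{n}\rightharpoonup y_{0}$ weakly. Since $G$ is closed and convex, the hypothesis says that $f(G)$ is weakly closed in $Y$, and therefore the weak limit $y_{0}$ must belong to $f(G)$. This proves that $f(G)$ is norm closed, which is the conclusion.

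There is no genuine obstacle in the argument; the whole content of the lemma is contained in the hypothesis, via the one-line implication \emph{weakly closed $\Rightarrow$ norm closed}. The boundedness of $G$ and the reflexivity of $X$ (and of $Y$) are used only in a preparatory way: they make the weak-closedness hypothesis checkable in practice by a weak-subsequence extraction, and they allow one to produce weak cluster points in $G$. If anything, the ``hard'' part of applying this lemma in a concrete setting is \emph{verifying} the weak-closedness hypothesis on $f(G)$, not running the deduction above.
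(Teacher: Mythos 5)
Your argument is correct for the lemma as literally stated, but it is a genuinely different (and far more elementary) route than the one the paper indicates. The paper disposes of this lemma with the single remark that ``the proof follows from the proof of Theorem 2,'' i.e.\ the author has in mind the sequence-extraction machinery used there: take $y_{m}=f(x_{m})\to y_{0}$, use boundedness of $G$ and reflexivity of $X$ (weak compactness of the bounded closed convex set $G$) to extract $x_{m_{k}}\rightharpoonup x_{0}\in G$, and then invoke a weak-closedness property to identify $y_{0}$ with a value of $f$ on $G$. You instead observe that the hypothesis, read literally, already contains the conclusion: since the weak topology is coarser than the norm topology, any weakly closed subset of $Y$ is norm closed, so ``$f(G)$ weakly closed for closed convex $G$'' immediately gives ``$f(G)$ norm closed for bounded closed convex $G$,'' with reflexivity, boundedness of $G$, and continuity of $f$ playing no logical role. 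That observation is accurate, and your explicit acknowledgment that the subsequence extraction is only preparatory (since $f$ is not weak-to-weak continuous, the weak cluster point $x_{0}$ cannot by itself certify $y_{0}\in f(G)$) is the right caution. What your reading loses, and what the paper's intended argument would buy, is relevance to the situation the author apparently has in mind: if the hypothesis were instead a weak closedness of the graph of $f$ (as in the paper's Lemma 3: $x_{n}\rightharpoonup x$, $f(x_{n})\rightharpoonup y$ imply $y=f(x)$), then your ``trivial'' step would no longer apply, and precisely the preparatory extraction $x_{m_{k}}\rightharpoonup x_{0}\in G$ followed by identification of the limit would become the essential content, with reflexivity and boundedness doing real work. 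So your proof is valid for the statement as printed, while also exposing that the printed hypothesis is so strong that the remaining assumptions are redundant --- a point worth flagging rather than hiding.
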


The proof follows from the proof of Theorem 2. Furthermore easily to see the
correctness of the following statement.

\begin{lemma}
Let $X,Y$ are Banach spaces and a bounded continuous mapping $f:D\left(
f\right) \subseteq X\longrightarrow Y$ has a weakly closed graph, in
addition for each bounded subset $M\subseteq R\left( f\right) $ of $Y$ the
preimage $f^{-1}\left( M\right) $ is a bounded subset of $X$ . Then if $X$
is a reflexive space then $f$ is a weakly closed mapping.
\end{lemma}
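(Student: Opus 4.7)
The plan is to verify that $f$ takes weakly closed subsets of $D(f)$ to weakly closed subsets of $Y$ (which is what I read ``weakly closed mapping'' as meaning, by analogy with the usual notion of a closed mapping). Take any weakly closed $G \subseteq D(f)$ and a sequence $\{y_n\} \subset f(G)$ with $y_n \rightharpoonup y_0$ in $Y$; the target is to produce $x_0 \in G$ with $y_0 = f(x_0)$.

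First I would pull $\{y_n\}$ back to $X$. Since weakly convergent sequences are norm-bounded, $\{y_n\}$ lies in a bounded subset $M \subseteq R(f)$, and the standing hypothesis renders $f^{-1}(M)$ bounded in $X$. Choosing $x_n \in G$ with $f(x_n) = y_n$ therefore yields a bounded sequence in the reflexive space $X$, from which I extract a weakly convergent subsequence $x_{n_k} \rightharpoonup x_0$. Weak closure of $G$ then places $x_0 \in G$.

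Next I would invoke the weakly-closed-graph hypothesis: the pairs $(x_{n_k}, y_{n_k})$ lie in the graph of $f$ and converge weakly to $(x_0, y_0)$ in $X \times Y$, so the graph hypothesis forces $y_0 = f(x_0)$, whence $y_0 \in f(G)$, as required.

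The main obstacle is conceptual rather than computational: one must fix the meaning of ``weakly closed mapping'' and then confirm that sequences, not merely nets, suffice to detect weak closure. In $X$, sequential extraction is legitimate by the Eberlein--Smulian theorem, which applies thanks to reflexivity. In $Y$ there is no analogous guarantee without further hypotheses, so the natural reading is sequential: $f(G)$ is weakly sequentially closed, which is exactly what the argument above delivers. If genuine weak closure in $Y$ is required, the same outline goes through verbatim with nets replacing sequences, using that norm-bounded subsets of the reflexive space $X$ are relatively weakly compact.
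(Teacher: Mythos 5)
Your main argument is correct, and since the paper offers no proof of this lemma at all (it is merely asserted as easy to see), your write-up in fact supplies the argument the paper presumably has in mind. The chain you use is exactly the natural one: a weakly convergent sequence $\{y_{n}\}\subset f(G)$ is norm-bounded by the uniform boundedness principle, so $M=\{y_{n}\}$ is a bounded subset of $R(f)$ and the hypothesis makes $f^{-1}(M)$ bounded; choosing preimages $x_{n}\in G$ and using reflexivity of $X$ (Eberlein--\v{S}mulian) gives $x_{n_{k}}\rightharpoonup x_{0}$, weak closedness of $G$ gives $x_{0}\in G$, and the weakly closed graph gives $y_{0}=f(x_{0})\in f(G)$. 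Every hypothesis is used where it is needed, and the conclusion obtained --- $f(G)$ is weakly \emph{sequentially} closed for each weakly closed $G\subseteq D(f)$ --- is the form in which the lemma interacts with Lemma 2 (closed convex sets are weakly closed, and the closedness arguments around Theorem 2 are sequential), so your reading of ``weakly closed mapping'' is the sensible one.

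One caveat: your closing claim that the argument ``goes through verbatim with nets'' is not correct. A weakly convergent net need not be norm-bounded, so the step that places the net inside a bounded subset $M\subseteq R(f)$ and then invokes boundedness of $f^{-1}(M)$ breaks down; consequently the argument does not deliver weak closedness of $f(G)$ in the topological (net) sense, only weak sequential closedness. Since that is all the paper needs, this does not affect the substance, but the remark should be deleted or weakened to avoid overclaiming.
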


\section{Comparative Analysis of Mappings on Banach spaces}

Now we will investigate properties of a mappings with use of the local
comparison between studied and known mappings. We will account here the
smooth mappings, in some sense, by known mappings. Here we will consider
essentially a single-valued mappings.

\begin{theorem}
Let $X$ , $Y$ be a reflexive Banach spaces and $X^{\ast }$, $Y^{\ast }$
their dual spaces, respectively, and let $f:D\left( f\right) \subseteq
X\longrightarrow Y$ \ be a bounded continuous single-valued mapping.

Assume that for an $x_{0}\in D\left( f\right) $ there is a mapping $%
f_{x_{0}}\equiv f_{0}:D\left( f\right) \subseteq X\longrightarrow Y$ of the
class $C^{0}$ such that on a neighbourhood $U_{\varepsilon _{0}}\left(
x_{0}\right) \subset D\left( f\right) $ satisfies the following conditions

(iv) $f_{0}\left( U_{\varepsilon _{0}}\left( x_{0}\right) \right) $ is a
bodily subset of $Y$, $f_{0}\left( x_{0}\right) \in f\left( x_{0}\right) $
and $f_{0}\left( V\left( x_{0}\right) \right) \equiv B_{r}^{Y}\left(
f_{0}\left( x_{0}\right) \right) $ for some neighbourhood $V\left(
x_{0}\right) \subseteq U_{\varepsilon _{0}}\left( x_{0}\right) $ in addition
the inverse mapping $f_{0}^{-1}$ is lower or upper semi-continuous as $%
f_{0}^{-1}:B_{r}^{Y}\left( f_{0}\left( x_{0}\right) \right) \longrightarrow
X $;

(v) there is a nondecreasing continuous function $k_{x_{0}}:\Re
_{+}\longrightarrow \Re _{+}$, $k_{x_{0}}\left( \tau \right) \geq 0$, $%
k_{x_{0}}\left( 0\right) =0$ such that 
\begin{equation}
\left\Vert f\left( x_{1}\right) -f\left( x_{2}\right) -f_{0}\left(
x_{1}\right) +f_{0}\left( x_{2}\right) \right\Vert _{Y}\leq k_{x_{0}}\left(
\left\Vert f_{0}\left( x_{1}\right) -f_{0}\left( x_{2}\right) \right\Vert
_{Y}\right)  \tag{3.1}
\end{equation}%
holds for any pair $x_{1},x_{2}\in V\left( x_{0}\right) $ and $%
k_{x_{0}}^{m_{0}}\left( \tau \right) \leq \sigma \tau $ for some $m_{0}\geq
1 $, $0<\sigma <1$.

Then there is a number $\delta =\delta \left( k_{x_{0}}\left( \varepsilon
_{0}\right) \right) $ such that a ball $B_{r_{1}}^{Y}\left( f\left(
x_{0}\right) \right) $ contains in the image $f\left( V\left( x_{0}\right)
\right) $ for a $r_{1}\in \left( 0,r\left( 1-\delta \right) \right) $.
\end{theorem}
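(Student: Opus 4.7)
The plan is to realize every $y \in B_{r_1}^Y(f(x_0))$ as $f(x^*)$ for some $x^* \in \overline{V(x_0)}$ via a Newton--Graves--Lyusternik iteration in which $f_0$ plays the role of an approximate inverse and the perturbation bound (3.1) supplies the contraction. Since $f$ is single-valued and $f_0(x_0) \in f(x_0)$, condition (iv) gives $f_0(x_0) = f(x_0)$, so $B_{r_1}^Y(f(x_0)) \subset B_r^Y(f_0(x_0)) = f_0(V(x_0))$. Starting from $x_0$, I would define inductively $x_{n+1} \in V(x_0) \cap f_0^{-1}\bigl(f_0(x_n) + y - f(x_n)\bigr)$; by (iv) such a selection exists as long as the point $z_n := f_0(x_n) + (y - f(x_n))$ lies in $B_r^Y(f_0(x_0))$.

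Writing $\varepsilon_n := \|y - f(x_n)\|_Y$, the identity $f_0(x_{n+1}) - f_0(x_n) = y - f(x_n)$ combined with (v) applied to the pair $(x_{n+1}, x_n)$ gives
\[
\varepsilon_{n+1} = \bigl\|(f(x_{n+1}) - f(x_n)) - (f_0(x_{n+1}) - f_0(x_n))\bigr\|_Y \leq k_{x_0}(\varepsilon_n),
\]
and iterating $m_0$ times produces the block contraction $\varepsilon_{n+m_0} \leq k_{x_0}^{m_0}(\varepsilon_n) \leq \sigma\,\varepsilon_n$. Hence $\varepsilon_n \to 0$ and, since $k_{x_0}$ is continuous with $k_{x_0}(0)=0$ and nondecreasing, $T_0(\varepsilon_0) := \max_{0 \leq s < m_0} k_{x_0}^{s}(\varepsilon_0)$ satisfies $T_0(\varepsilon_0) \to 0$ as $\varepsilon_0 \to 0$, which yields the telescoping estimate
\[
\|f_0(x_n) - f_0(x_0)\|_Y \leq \sum_{k=0}^{n-1} \varepsilon_k \leq \frac{m_0\,T_0(\varepsilon_0)}{1-\sigma} =: C(\varepsilon_0).
\]
Choosing $r_1 \leq r(1 - \delta)$ with $\delta = \delta(k_{x_0}(\varepsilon_0))$ chosen to absorb $C(\varepsilon_0) + \varepsilon_0$ keeps every $z_n$ inside $B_r^Y(f_0(x_0))$, so the iteration never leaves the domain supplied by (iv). Consequently $\{f_0(x_n)\}$ is Cauchy in $Y$ with a limit $z^* \in B_r^Y(f_0(x_0))$, and $f(x_n) \to y$.

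It then remains to extract a limit in $X$. Upper semi-continuity of $f_0^{-1}$ (with $f_0^{-1}(z^*)$ compact, given the closedness of $\overline{V(x_0)}$ and continuity of $f_0$) lets me pass to a subsequence $x_{n_j} \to x^* \in f_0^{-1}(z^*) \subset \overline{V(x_0)}$; alternatively, lower semi-continuity allows a refinement of the selection $x_{n+1}$ at each stage so that $x_n$ stays close to a fixed preimage of $z^*$. Continuity of $f$ then delivers $f(x^*) = \lim_j f(x_{n_j}) = y$, whence $y \in f(\overline{V(x_0)})$, which is the claim.

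The main obstacle is exactly this passage from $f_0(x_n) \to z^*$ in $Y$ to a convergent selection $x_{n_j} \to x^*$ in $X$. Because $f_0$ is only assumed continuous (not injective), the contraction argument directly controls only the residuals $\varepsilon_n$ and the $Y$-sequence $\{f_0(x_n)\}$; the semi-continuity hypothesis on $f_0^{-1}$ in (iv) is precisely what one needs to promote this to compactness of the $X$-selections. A secondary technical nuisance is book-keeping the constant $C(\varepsilon_0)$ so that $\delta = \delta(k_{x_0}(\varepsilon_0))$ as in the statement, but this is routine once $T_0(\varepsilon_0)$ is identified.
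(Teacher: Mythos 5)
Your proposal is correct and follows essentially the same route as the paper: your Newton--Graves recursion $f_{0}(x_{n+1})=f_{0}(x_{n})+y-f(x_{n})$ is exactly the paper's successive-approximation scheme (3.2), you derive the same block contraction $\varepsilon_{n+m_{0}}\leq\sigma\varepsilon_{n}$ from (v) and the same telescoping bound keeping $f_{0}(x_{n})$ in $B_{r}^{Y}(f_{0}(x_{0}))$, and you close the argument as the paper does by invoking the semi-continuity of $f_{0}^{-1}$ in (iv) to extract $x_{n_{j}}\rightarrow x^{\ast}$ and then continuity of $f$ to get $f(x^{\ast})=y$.
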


\begin{remark}
If in Theorem 4 $f\in C^{1}$ on the neighborhood $U_{\varepsilon _{0}}\left(
x_{0}\right) \subset D\left( f\right) $, and the linear continuous operator $%
Df\left( x_{0}\right) $ such that $Im\left( Df\left( x_{0}\right) \right)
\equiv Y$ and $\left\Vert Df\left( x_{0}\right) \right\Vert
_{X\longrightarrow Y}>0$ then we can choose $f_{0}\equiv Df\left(
x_{0}\right) $ on $U_{\varepsilon _{0}}\left( x_{0}\right) $.
\end{remark}

Really, let $f_{0}\equiv Df\left( x_{0}\right) :X\longrightarrow Y$ is
linear continuous operator then there is an element $\widetilde{x}\in
V\left( x_{0}\right) $ such that 
\begin{equation*}
\left\Vert f\left( x_{1}\right) -f\left( x_{2}\right) -Df\left( x_{0}\right)
\left( x_{1}-x_{2}\right) \right\Vert _{Y}\leq
\end{equation*}%
\begin{equation*}
\left\Vert f\left( x_{1}\right) -f\left( x_{2}\right) -Df\left( \widetilde{x}%
\right) \left( x_{1}-x_{2}\right) \right\Vert _{Y}+
\end{equation*}%
\begin{equation*}
\left\Vert \left[ Df\left( \widetilde{x}\right) -Df\left( x_{0}\right) %
\right] \left( x_{1}-x_{2}\right) \right\Vert _{Y}\leq
\end{equation*}%
\begin{equation*}
k_{1}\left( \varepsilon _{0}\right) \left\Vert x_{1}-x_{2}\right\Vert
_{X}+k_{2}\left( \varepsilon _{0}\right) \left\Vert x_{1}-x_{2}\right\Vert
_{X}\leq
\end{equation*}%
\begin{equation*}
k_{1}\left( \varepsilon _{0}\right) \left\Vert x_{1}-x_{2}\right\Vert
_{X}+k_{2}\left( \varepsilon _{0}\right) \left\Vert Df\left( x_{0}\right)
\left( x_{1}-x_{2}\right) \right\Vert _{Y}
\end{equation*}%
for any $x_{1},x_{2}\in V\left( x_{0}\right) \subseteq U_{\varepsilon
_{0}}\left( x_{0}\right) $ by virtue of continuity of $Df\left( x\right) $
at $x$\ (as $f\in C^{1}$ on $V\left( x_{0}\right) $) and invertibility of
the operator $Df\left( x_{0}\right) $.

From here follows that there is $k_{0}\left( \varepsilon _{0}\right) >0$
such that 
\begin{equation*}
\left\Vert f\left( x_{1}\right) -f\left( x_{2}\right) -Df\left( x_{0}\right)
\left( x_{1}-x_{2}\right) \right\Vert _{Y}\leq k_{0}\left( \varepsilon
_{0}\right) \left\Vert Df\left( x_{0}\right) \left( x_{1}-x_{2}\right)
\right\Vert _{Y}
\end{equation*}%
holds. It is clear that with choosing $\varepsilon _{0}>0$ we can make the
coefficient $k_{0}\left( \varepsilon _{0}\right) >0$ such that the
inequation $k_{0}\left( \varepsilon _{0}\right) <1$ will satisfied. In
particular the result of [16] follows from here.

\begin{proof}
(of Theorem 4) For simplicity we assume $f\left( x_{0}\right) =f_{0}\left(
x_{0}\right) =0$. Let $y\in B_{r_{1}}^{Y}\left( 0\right) $ $\equiv
f_{0}\left( V\left( x_{0}\right) \right) $ is an element. We will show that $%
y\in f\left( V\left( x_{0}\right) \right) $, i.e. the equation $f\left(
x\right) =y$ is solvable in $V\left( x_{0}\right) $ for any $y\in
B_{r_{1}}^{Y}\left( 0\right) $ for some $r_{1}\in \left( 0,r\left( 1-\delta
\right) \right) $.

For this we will use the method of successive approximations, which is
constructed by recurrence formula (cf. [16]) 
\begin{equation}
\left\{ 
\begin{array}{c}
y_{m-1}=f_{0}\left( x_{m}\right) -f_{0}\left( x_{m-1}\right) , \\ 
y_{m}=f_{0}\left( x_{m}\right) -f_{0}\left( x_{m-1}\right) -f\left(
x_{m}\right) +f\left( x_{m-1}\right)%
\end{array}%
\right. ,\quad m=1,2,...  \tag{3.2}
\end{equation}%
where $x_{0}$ is the above element and $y_{0}$ is an element of $f\left(
V\left( x_{0}\right) \right) $. Solvability of the system (3.2) is obviously
for each chosen pair ($x_{0}$, $y_{0}$) when $x_{0}\in V\left( x_{0}\right) $
and $y_{0}\in f\left( V\left( x_{0}\right) \right) $.

Now we need to show that obtained here sequences $\left\{ x_{m}\right\} $
and $\left\{ y_{m}\right\} $ are convergent. Really from second equation of
(3.2) we have%
\begin{equation*}
\left\Vert y_{m}\right\Vert _{Y}\leq k\left( \left\Vert f_{0}\left(
x_{m}\right) -f_{0}\left( x_{m-1}\right) \right\Vert _{Y}\right)
\end{equation*}%
by condition (\textit{v}). Hence follows 
\begin{equation*}
\left\Vert f_{0}\left( x_{m}\right) -f_{0}\left( x_{m-1}\right) \right\Vert
_{Y}=\left\Vert y_{m-1}\right\Vert _{Y}\leq k\left( \left\Vert f_{0}\left(
x_{m-1}\right) -f_{0}\left( x_{m-2}\right) \right\Vert _{Y}\right) .
\end{equation*}%
Then we obtain the following expression with sequential application of this
inequality 
\begin{equation*}
\left\Vert f_{0}\left( x_{m}\right) -f_{0}\left( x_{m-1}\right) \right\Vert
_{Y}\leq k\left( ...\left( k\left( \left\Vert f_{0}\left( x_{1}\right)
-f_{0}\left( x_{0}\right) \right\Vert _{Y}\right) \right) ...\right) =
\end{equation*}%
\begin{equation*}
k^{m-1}\left( \left\Vert f_{0}\left( x_{1}\right) -f_{0}\left( x_{0}\right)
\right\Vert _{Y}\right) \leq \sigma ^{s}\left\Vert f_{0}\left( x_{1}\right)
-f_{0}\left( x_{0}\right) \right\Vert _{Y}
\end{equation*}%
for any $m:sm_{0}+1$, $s=1,2,...$. From here follows 
\begin{equation}
\left\Vert f_{0}\left( x_{m}\right) -f_{0}\left( x_{m-1}\right) \right\Vert
_{Y}\leq \sigma ^{s}\left\Vert y_{0}\right\Vert _{Y}\equiv \sigma
^{s}\left\Vert y\right\Vert _{Y},  \tag{3.3}
\end{equation}%
and also 
\begin{equation*}
\left\Vert f_{0}\left( x_{m}\right) \right\Vert _{Y}\leq \sigma
^{s}\left\Vert y\right\Vert _{Y}+\left\Vert f_{0}\left( x_{m-1}\right)
\right\Vert _{Y}\leq ...\leq
\end{equation*}%
\begin{equation*}
\underset{k=0}{\overset{s}{\sum }}\sigma ^{k}\left\Vert y\right\Vert _{Y}~=%
\frac{1-\sigma ^{s+1}}{1-\sigma }\left\Vert y\right\Vert _{Y}.
\end{equation*}%
Hence we claim that the sequence $\left\{ f_{0}\left( x_{m}\right) \right\}
_{m=1}^{\infty }$ belongs to a bounded subset of $Y$ and at that $\left\Vert
f_{0}\left( x_{m}\right) \right\Vert _{Y}\leq r_{1}$, $m=1,2,...$. Moreover
we obtain, that $\left\{ y_{m}\right\} $ and $\left\{ f_{0}\left(
x_{m}\right) \right\} _{m=1}^{\infty }$ are convergent sequences and there
is $\widetilde{y}\in B_{r}^{Y}\left( 0\right) $ such that $%
y_{m}\Longrightarrow 0$ and $f_{0}\left( x_{m}\right) \Longrightarrow 
\widetilde{y}$ by virtue of (3.3).

Now if we will take account condition (\textit{iv}) then there exists a
subsequence $\left\{ x_{m_{k}}\right\} _{1}^{\infty }$ of $\left\{
x_{m}\right\} _{m=1}^{\infty }$ and an element $\widetilde{x_{0}}\in V\left(
x_{0}\right) $ such that $x_{m_{k}}\Longrightarrow \widetilde{x_{0}}$ in $X$%
, and consequently \ $f_{0}\left( x_{m_{k}}\right) \Longrightarrow \
f_{0}\left( \widetilde{x_{0}}\right) \equiv \widetilde{y}$ because of
continuity of the mapping $\ f_{0}$.

Further from (3.2) follows $y_{m-1}-y_{m}=f\left( x_{m}\right) -f\left(
x_{m-1}\right) $, therefore we have 
\begin{equation*}
y_{0}-y_{m}\equiv y-y_{m}=f\left( x_{m}\right) -f\left( x_{0}\right) .
\end{equation*}%
Then we obtain $y-y_{m}=f\left( x_{m}\right) $ as $f\left( x_{0}\right) =0$.
If we pass to the limit at $m\longrightarrow \infty $ then we obtain, that $%
f\left( x_{m}\right) \Longrightarrow y$ because $y_{m}\Longrightarrow 0$,
i.e. $f\left( \widetilde{x_{0}}\right) \equiv y$.

Thus we showed that for any $y\in B_{r_{1}}^{Y}\left( 0\right) ,$ $\
r_{1}<\left( 1-\sigma \right) r$ the equation $f\left( x\right) \equiv y$ is
solvable in $V\left( x_{0}\right) \subseteq U_{\varepsilon }\left(
x_{0}\right) $, i.e. $B_{r_{1}}^{Y}\left( 0\right) \subseteq f\left( V\left(
x_{0}\right) \right) $.
\end{proof}

For study a multivalued mappings we can use this result, therefore we will
consider the following conditions.

(1) Let $f:D\left( f\right) \subseteq X\longrightarrow X^{\ast }$ be a
bounded multivalued lower semi-continuous mapping (i.e. $f\in C_{lsc}\left(
D\left( f\right) ;X^{\ast }\right) $), the image $f\left( x\right) $ is a
convex closed subset of $X^{\ast }$ of each $x\in D\left( f\right) $, and $%
G\subseteq D\left( f\right) $ is a convex body of $X$;

(2) Let for an $x_{0}\in G$ there is a single-valued mapping $%
f_{x_{0}}\equiv f_{0}:D\left( f_{0}\right) \subseteq X\longrightarrow
X^{\ast }$ of the class $C^{0}$, $f_{0}\left( x_{0}\right) \in f\left(
x_{0}\right) $\ such that $f_{0}\left( U_{\varepsilon _{0}}\left(
x_{0}\right) \right) $ is a bodily subset of $X^{\ast }$ for a neighbourhood 
$U_{\varepsilon _{0}}\left( x_{0}\right) \subseteq G\cap D\left(
f_{0}\right) $, in addition $f_{0}\left( V\left( x_{0}\right) \right) \equiv
B_{r}^{Y}\left( f_{0}\left( x_{0}\right) \right) $ for some subset $V\left(
x_{0}\right) \subseteq U_{\varepsilon _{0}}\left( x_{0}\right) $, $%
f_{0}^{-1} $ is a lower (or upper) semi-continuous mapping as $%
f_{0}^{-1}:B_{r}^{Y}\left( f_{0}\left( x_{0}\right) \right) \longrightarrow
V\left( x_{0}\right) $ and for $y_{0}=f_{0}\left( x_{0}\right) $ $\in
f\left( x_{0}\right) $ satisfies the following inequations 
\begin{equation*}
\left\{ \left\langle y-y_{0},x-x_{0}\right\rangle \left\vert ~\exists
y\right. \in f\left( x\right) \right\} \geq k\left\langle f_{0}\left(
x\right) -f_{0}\left( x_{0}\right) ,x-x_{0}\right\rangle \geq \nu \left(
\left\Vert x-x_{0}\right\Vert _{X}\right)
\end{equation*}%
\begin{equation*}
\&\ \sup \left\{ \left\Vert y-y_{0}\right\Vert _{X^{\ast }}\left\vert
~y\right. \in f\left( x\right) \right\} \leq \mu \left( \left\Vert
x-x_{0}\right\Vert _{X}\right) ,\ \forall x\in U_{\varepsilon _{0}}\left(
x_{0}\right) ,\ k>0-const.
\end{equation*}%
\begin{equation*}
\nu \in C^{0}\left( R_{+}^{1};R^{1}\right) ,\ \mu \in C^{0}\left(
R_{+}^{1};R_{+}^{1}\right) ,~\exists \varepsilon _{1}:\varepsilon _{0}\geq
\varepsilon _{1}>0,~\nu \left( \varepsilon _{1}\right) \geq \delta >0;
\end{equation*}

(3) There is a nondecreasing continuous function $k_{x_{0}}\left( \tau
\right) \geq 0$ such that 
\begin{equation*}
\left\{ \left\Vert y_{1}-y_{2}-f_{0}\left( x_{1}\right) +f_{0}\left(
x_{2}\right) \right\Vert _{X^{\ast }}~\left\vert ~\forall y_{1}\in f\left(
x_{1}\right) ,\right. ~\exists y_{2}\in f\left( x_{2}\right) -f\left(
x_{1}\right) \right\} \leq
\end{equation*}%
\begin{equation}
k_{x_{0}}\left( \left\Vert f_{0}\left( x_{1}\right) -f_{0}\left(
x_{2}\right) \right\Vert _{X^{\ast }}\right)  \tag{3.4}
\end{equation}%
holds for any pair $x_{1},x_{2}\in V\left( x_{0}\right) $ and $%
k_{x_{0}}^{m_{0}}\left( \tau \right) \leq \sigma \tau $ for some $m_{0}\geq
1 $, $0<\sigma <1$.

\begin{corollary}
Let conditions (1) - (3) are fulfilled. Then there is a number $\delta
=\delta \left( k_{x_{0}}\left( \varepsilon _{0}\right) \right) $ such that a
ball $B_{r_{1}}^{X^{\ast }}\left( f_{0}\left( x_{0}\right) \right) $ belongs
the image $f\left( V\left( x_{0}\right) \right) $ for some $r_{1}\in \left(
0,r\left( 1-\delta \right) \right) $.
\end{corollary}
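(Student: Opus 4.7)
The plan is to adapt the successive-approximation scheme of Theorem~4 to the present multivalued setting, using condition~(3) in place of the single-valued estimate (3.1) and exploiting condition~(3)'s built-in selection at each step. Without loss of generality assume $x_0=0$ and $f_0(x_0)=0\in f(x_0)$, and fix a target $y\in B_{r_1}^{X^{\ast }}(0)$ with $r_1<(1-\sigma )r$. I would build inductively sequences $\{x_m\}\subset V(x_0)$ and $\{y^{(m)}\}\subset X^{\ast }$ with $y^{(m)}\in f(x_m)$, together with auxiliary ``defect'' elements $\{y_m\}\subset X^{\ast }$, by the recurrence
\begin{equation*}
y_{m-1}=f_0(x_m)-f_0(x_{m-1}),\qquad y_m=f_0(x_m)-f_0(x_{m-1})-y^{(m)}+y^{(m-1)},
\end{equation*}
initialised at $y^{(0)}=0$, $y_0=y$. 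At each step $x_m\in V(x_0)$ is recovered from the first identity by solving $f_0(x_m)=f_0(x_{m-1})+y_{m-1}$ inside $B_r^{X^{\ast }}(0)$ using the surjectivity of $f_0$ onto that ball and the lower (or upper) semi-continuity of $f_0^{-1}$ granted by condition~(2); the selection $y^{(m)}\in f(x_m)-f(x_{m-1})$ is then supplied by condition~(3) applied to $y^{(m-1)}\in f(x_{m-1})$, which yields the key bound
\begin{equation*}
\|y_m\|_{X^{\ast }}\leq k_{x_0}\bigl(\|f_0(x_m)-f_0(x_{m-1})\|_{X^{\ast }}\bigr)=k_{x_0}\bigl(\|y_{m-1}\|_{X^{\ast }}\bigr).
\end{equation*}

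Iterating this estimate and invoking the contraction hypothesis $k_{x_0}^{m_0}(\tau )\leq \sigma \tau $ reproduces exactly the geometric decay (3.3), giving $\|f_0(x_m)-f_0(x_{m-1})\|_{X^{\ast }}\leq \sigma ^s\|y\|_{X^{\ast }}$ for $m\geq sm_0+1$ and hence $\|y_m\|_{X^{\ast }}\to 0$. The telescoping identity $y^{(m)}-y^{(0)}=y-y_m$ then gives $y^{(m)}\longrightarrow y$ in $X^{\ast }$, while $\{f_0(x_m)\}$ is Cauchy inside $B_{r_1}^{X^{\ast }}(0)$ for any $r_1<(1-\sigma )r$. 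Lower (or upper) semi-continuity of $f_0^{-1}$ on $B_r^{X^{\ast }}(0)$ combined with the convergence of $\{f_0(x_m)\}$ then yields a subsequence $x_{m_k}\longrightarrow \widetilde{x_0}\in V(x_0)$, and continuity of $f_0$ gives $f_0(x_{m_k})\longrightarrow f_0(\widetilde{x_0})$.

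The main obstacle is the closing limit assertion $y\in f(\widetilde{x_0})$, since $f$ is only lower semi-continuous and set-valued rather than closed-graph. This is exactly the passage already carried out in the proof of the Lemma following Theorem~2 (and in Lemma~1 of~[28]): from $y^{(m_k)}\in f(x_{m_k})$ with $y^{(m_k)}\longrightarrow y$ and $x_{m_k}\longrightarrow \widetilde{x_0}$, one combines the lower semi-continuity of $f$ with closedness and convexity of $f(\widetilde{x_0})$ to force $y\in f(\widetilde{x_0})\subseteq f(V(x_0))$. The constant $\delta =\delta (k_{x_0}(\varepsilon _0))$ in the statement is then read off, as in Theorem~4, from the contraction rate $\sigma $ controlled by $k_{x_0}$ together with the admissibility condition $r_1<(1-\sigma )r$ that keeps every $f_0(x_m)$ inside the ball $B_r^{X^{\ast }}(0)$ on which $f_0^{-1}$ is defined and semi-continuous.
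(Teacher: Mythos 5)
Your iteration is a reasonable direct adaptation of Theorem 4 to the multivalued setting (for comparison, the paper disposes of this Corollary in one line: conditions (1)--(3), i.e. lower semicontinuity with closed convex values, furnish a continuous single-valued selection of $f$ which is then fed into Theorem 4). However, the closing step --- exactly the one you single out as the main obstacle --- has a genuine gap. From $y^{(m_k)}\in f(x_{m_k})$, $y^{(m_k)}\longrightarrow y$ and $x_{m_k}\longrightarrow \widetilde{x_0}$ you conclude $y\in f(\widetilde{x_0})$ by invoking lower semicontinuity of $f$ together with closedness and convexity of $f(\widetilde{x_0})$. That inference is false in general: lower semicontinuity controls the opposite direction (every point of $f(\widetilde{x_0})$ can be approximated by points of $f(x_{m_k})$), and an lsc map with closed convex bounded values need not have a closed graph --- e.g. $f(0)=\{0\}$, $f(x)=[0,1]$ for $x\neq 0$ is lsc with closed convex values, yet $1\in f(1/n)$, $1/n\to 0$, $1\notin f(0)$. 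Nor does the Lemma following Theorem 2 (or Lemma 1 of [28]) cover this passage: there the mapping is assumed continuous as a multivalued map ($f_1\in C_{cm}^{0}$, hence upper as well as lower semicontinuous) and condition (iii) of Theorem 2 is in force, whereas condition (1) of the present Corollary gives only lower semicontinuity, which is precisely why the paper switches to a single-valued continuous selection instead of arguing on $f$ itself.

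The gap is repairable without abandoning your scheme, but it needs condition (3) rather than semicontinuity: apply (3.4) once more with $x_1=x_{m_k}$, $x_2=\widetilde{x_0}$ and $y_1=y^{(m_k)}$; it supplies $z_k\in f(\widetilde{x_0})$ with $\left\Vert y^{(m_k)}-z_k-f_0(x_{m_k})+f_0(\widetilde{x_0})\right\Vert _{X^{\ast }}\leq k_{x_0}\left( \left\Vert f_0(x_{m_k})-f_0(\widetilde{x_0})\right\Vert _{X^{\ast }}\right) \to 0$, whence $z_k\longrightarrow y$ and closedness of $f(\widetilde{x_0})$ gives $y\in f(\widetilde{x_0})\subseteq f\left( V\left( x_{0}\right) \right) $. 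Alternatively, follow the paper's route: use (1) to extract a continuous selection agreeing with $f_0(x_0)$ at $x_0$ and verify the hypotheses of Theorem 4 for it. As written, though, the decisive limit assertion does not follow from the properties you invoke.
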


For the proof it is enough to note that in the conditions of Corollary there
is a continuous selection, which fulfils the conditions of Theorem 4.

Now we will consider the solvability problem globally when the mapping is
single-valued.

(4) Let $f:D\left( f\right) \subseteq X\longrightarrow Y$ be a continuous
mapping, i.e. $f\in C^{0}$;

(5) Let $G\subset X$ be closed convex body of $D\left( f\right) $, $G_{1}$
be subset of $G$, $\overline{G_{1}}\equiv clG_{1}\supseteq G$, there exists
a family of mappings $f_{\xi }:$ $D\left( f_{\xi }\right) \subseteq
X\longrightarrow Y$, $\xi \in G_{1}$ such that $f_{\xi }\left( V_{\xi
}\right) $ is a neighbourhood of $Y$ for some convex body $V_{\xi }\subseteq
D\left( f_{\xi }\right) \cap G\neq \varnothing $, $\left\{ \dbigcup V_{\xi
}\left\vert \ \xi \in G_{1}\right. \right\} \equiv G\subseteq D\left(
f\right) $\ and also\ pair of mappings $f$ and $f_{\xi }$, $\xi \in G_{1}$
satisfies all conditions of Theorem 4 on each subset $V_{\xi }$;

(6) There are a bounded continuous mapping $g:D\left( g\right) \subseteq
X\longrightarrow Y^{\ast }$, $\left\{ \dbigcup V_{\xi }\left\vert \ \xi \in
G_{1}\right. \right\} $ $\subseteq D\left( g\right) $ and an element $%
x_{0}\in int~G$ such that $g\left( G\right) \equiv B_{1}^{Y^{\ast }}\left(
0\right) $, $g\left( x_{0}\right) =0$ and 
\begin{equation*}
\left\langle f\left( x\right) -f\left( x_{0}\right) ,g\left( x\right)
\right\rangle \geq \nu \left( \left\Vert x-x_{0}\right\Vert _{X}\right) ,
\end{equation*}%
holds for any $x\in G$, and pair of mappings $f$, $g$ and the function $\nu
\nearrow $ satisfies all conditions of Theorem 1 also.

\begin{theorem}
Let $X$ and $Y$ are reflexive Banach spaces, and let the conditions (4) -
(6) are fulfilled on $G\subseteq D\left( f\right) \subseteq X$. Then the
image $f\left( G\right) $ is bodily subset of $Y$ and belongs the following
subset 
\begin{equation*}
M\equiv \left\{ y\in Y\left\vert ~\left\langle y,g\left( x\right)
\right\rangle \leq \left\langle f\left( x\right) ,g\left( x\right)
\right\rangle \right. \forall x\in G_{0}\subseteq g^{-1}\left(
S_{1}^{Y^{\ast }}\left( 0\right) \right) \right\}
\end{equation*}%
defined by $G_{0}$ where $G_{0}\subseteq G$ and $g\left( G_{0}\right) \equiv 
$ $S_{1}^{Y^{\ast }}\left( 0\right) $.
\end{theorem}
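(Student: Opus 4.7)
The plan is to combine the global coercive conclusion of Theorem 1 (driven by condition (6)) with the local openness supplied by Theorem 4 on each patch (driven by condition (5)), in complete parallel with how Theorem 2 upgraded the conclusion of Theorem 1 by means of the stability inequality (1.3). Thus the proof splits into two steps: first, obtain a subset of $f(G)$ which is everywhere dense in $M$; second, close this subset up to the whole of $M$ by exploiting local surjectivity on each $V_{\xi}$.

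For the first step I would apply Theorem 1 directly to the pair $(f,g)$ on the closed convex body $G$, with center $x_{0}\in \text{int}\,G$ from (6), taking $U_{0}\equiv G$ and $U_{1}\equiv G_{0}\subseteq g^{-1}(S_{1}^{Y^{\ast }}(0))$. Hypothesis $(\alpha)$ is immediate from (4); $(\beta_{1})$ follows from continuity and boundedness of $f$ on $G$ after choosing a suitable nondecreasing $\mu$; and $(\beta_{2})$ is precisely condition (6), since $g(G)\equiv B_{1}^{Y^{\ast }}(0)$, $g(x_{0})=0$, and the coercivity $\langle f(x)-f(x_{0}),g(x)\rangle \geq \nu(\|x-x_{0}\|_{X})$ supplies the required lower bound. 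Theorem 1 then yields a subset $M_{0}^{\prime }\subseteq f(G)$ everywhere dense in a bodily set of the form (0.2); rewriting $y^{\ast }=g(x)$ with $x\in G_{0}$ identifies this set with $M$.

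For the second step, let $y_{0}\in M$ and pick $\{y_{m}\}\subseteq M_{0}^{\prime }$ with $y_{m}\to y_{0}$ in $Y$ and $y_{m}=f(x_{m})$, $x_{m}\in G$. By the dichotomy used in the proof of Lemma 1 of Section 2, either infinitely many $y_{m}$ coincide with $f(x)$ for a single $x\in G$ (in which case closedness of $f(x)$ and continuity of $f$ already give $y_{0}\in f(G)$), or we pass to a subsequence and use the covering $G\equiv \bigcup_{\xi \in G_{1}}V_{\xi }$ to place each $x_{m}$ inside a patch $V_{\xi (m)}$. On each such patch the pair $(f,f_{\xi (m)})$ satisfies the hypotheses of Theorem 4, which gives a ball $B_{r_{\xi (m)}}^{Y}(f(\xi (m)))\subseteq f(V_{\xi (m)})$ around the local center; combined with the Cauchy property of $\{y_{m}\}$ this forces the preimages eventually to concentrate in a single patch. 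Reflexivity of $X$ plus weak closedness of the convex body $G$ then produces a weak cluster point $\widetilde{x}_{0}\in G$, and on the ball $B_{r}^{Y}(f_{\xi }(\xi ))$ the semi-continuous local inverse $f_{\xi }^{-1}$ of condition (iv) upgrades this to strong subsequential convergence $x_{m_{k}}\to \widetilde{x}_{0}$. Continuity of $f$ finally yields $f(\widetilde{x}_{0})=y_{0}$, so $M\subseteq f(G)$ and $f(G)$ is bodily.

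The main obstacle I anticipate is the coherent selection of preimages in the second step: the patches $V_{\xi }$ may overlap arbitrarily, and a priori the preimages of the Cauchy sequence $\{y_{m}\}$ could drift from patch to patch instead of accumulating at a single point of $G$. The uniform local openness supplied by Theorem 4 on each $V_{\xi }$, coupled with the global coercivity encoded in condition (6), is precisely what rules out such drift; this plays here the same role that the stability inequality (1.3) played in upgrading Theorem 1 to Theorem 2.
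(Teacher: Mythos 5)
Your first step coincides with the paper's: conditions (4) and (6) are fed into Theorem 1 with the mapping $g$, producing a subset of $f(G)$ everywhere dense in the bodily set $M$. Where you diverge is the second step, and there the proposal has genuine gaps. You try to repeat the sequential closedness argument of Lemma 1/Theorem 2, but that argument was powered by the stability hypothesis (1.3), which is \emph{not} among conditions (4)--(6); here the only local information is that on each patch $V_{\xi}$ the pair $(f,f_{\xi})$ satisfies the hypotheses of Theorem 4. Concretely: (a) your claim that the Cauchy property of $\{y_{m}\}$ ``forces the preimages eventually to concentrate in a single patch'' is unsupported --- condition (5) gives no uniform lower bound on the sizes of the $V_{\xi}$ nor on the radii $r_{1}(\xi)$ produced by Theorem 4, and those balls are centered at $f(\xi)$, not at the points $y_{m}$, so convergence of $\{y_{m}\}$ exerts no control on where the preimages $x_{m}$ sit; (b) the upgrade from a weak cluster point to strong convergence via the semi-continuous local inverse $f_{\xi}^{-1}$ presupposes that $f_{\xi}(x_{m_{k}})$ converges in $Y$, but only $f(x_{m_{k}})$ is known to converge, and the comparison inequality (3.1) that would transfer this holds only for pairs inside one fixed $V_{\xi}$, which is exactly what has not been secured; (c) without strong convergence of $x_{m_{k}}$, norm-continuity of $f$ does not yield $f(\widetilde{x}_{0})=y_{0}$ (and, as a side issue, $G$ is not assumed bounded, so even the weak cluster point is not automatic). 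You flagged the drift problem yourself, but resolving it by asserting that ``uniform local openness plus coercivity'' rules it out is not a proof --- no such uniformity is available.

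The paper avoids this machinery altogether: it invokes the \emph{conclusion} of Theorem 4 on every patch, namely that $f(V_{\xi})$ contains a ball $B_{r_{1}}^{Y}(f(\xi))$ and hence is a bodily subset of $Y$, and then uses the covering $G\equiv\bigcup_{\xi\in G_{1}}V_{\xi}$ from condition (5) to conclude that $f(G)$ is bodily, completing the argument by rerunning the proof of Theorem 1 rather than by extracting convergent preimage subsequences. If you want to keep your sequential route, you would need to add (or derive) a uniform patch/radius estimate and an analogue of (1.3) valid across patches; as written, the second half of your argument does not go through.
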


\begin{proof}
Under conditions of Theorem 5 follows, that $f\left( G\right) $ contains a
subset which is at least everywhere dense in some bodily subset of $M$ by
virtue of Theorem 1.

Now with using of Theorem 4 we obtain, that $f\left( V_{\xi }\right) $ is
bodily subset of $Y$ and consequently $f\left( G\right) $ is the bodily set
of $Y$ as far as $f\left( G\right) \subseteq \left\{ \dbigcup f\left( V_{\xi
}\right) \left\vert \ \xi \in G_{1}\right. \right\} $. Thus for the
completion of this proof is remained to use the proof of Theorem 1.
\end{proof}

The following statement immediately follows from Theorem 5.

\begin{corollary}
Let all conditions of Theorem 5 are fulfilled on $D\left( f\right) \equiv X$%
, and also the following relation holds 
\begin{equation*}
\nu \left( \left\Vert x\right\Vert _{X}\right) \nearrow \infty ,\quad \text{%
at \ }\left\Vert x\right\Vert _{X}\nearrow \infty .
\end{equation*}

Then $f\left( X\right) =Y$, i.e. $f$ is surjection.
\end{corollary}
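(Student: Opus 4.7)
The plan is to fix an arbitrary target $y \in Y$ and, by applying Theorem~5 to balls $B_r^X(x_0)$ of increasing radius $r$, show that $y$ lies in the set $M$ produced by that theorem for some sufficiently large $r$; this immediately places $y$ in $f(X)$.

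More concretely, let $x_0 \in X$ be the distinguished point from condition~(6) and, for each $r > 0$, set $G_r \equiv B_r^X(x_0)$, a closed convex body in $D(f) = X$. Since the hypotheses of Theorem~5 are assumed on all of $X$, they hold in particular with $G = G_r$: there is an associated bounded continuous $g$ with $g(G_r) \equiv B_1^{Y^\ast}(0)$, $g(x_0) = 0$, and a subset $G_{0,r} \subseteq G_r$ with $g(G_{0,r}) \equiv S_1^{Y^\ast}(0)$; in parallel with the construction from Theorem~1 (where $U_1 \equiv S_{r_0}^X(x_0) \cap D(g)$), the set $G_{0,r}$ can be arranged to lie on the sphere $S_r^X(x_0)$, so that $\|x - x_0\|_X = r$ for every $x \in G_{0,r}$. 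Theorem~5 then supplies $f(G_r) \supseteq M_r$, where
\begin{equation*}
M_r \equiv \{y' \in Y \mid \langle y', g(x)\rangle \leq \langle f(x), g(x)\rangle, \; \forall x \in G_{0,r}\}.
\end{equation*}

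It remains to verify that $y \in M_r$ for all sufficiently large $r$. For each $x \in G_{0,r}$, condition~(6) gives
\begin{equation*}
\langle f(x) - f(x_0), g(x)\rangle \geq \nu(\|x - x_0\|_X) = \nu(r),
\end{equation*}
while the duality bound and $\|g(x)\|_{Y^\ast} = 1$ yield
\begin{equation*}
\langle y - f(x_0), g(x)\rangle \leq \|y - f(x_0)\|_Y.
\end{equation*}
The coercivity assumption $\nu(\tau) \nearrow \infty$ as $\tau \nearrow \infty$ then lets me choose $r$ so large that $\nu(r) \geq \|y - f(x_0)\|_Y$, in which case
\begin{equation*}
\langle y, g(x)\rangle - \langle f(x_0), g(x)\rangle \leq \|y - f(x_0)\|_Y \leq \nu(r) \leq \langle f(x) - f(x_0), g(x)\rangle,
\end{equation*}
i.e.\ $\langle y, g(x)\rangle \leq \langle f(x), g(x)\rangle$ for every $x \in G_{0,r}$, so $y \in M_r \subseteq f(G_r) \subseteq f(X)$. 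Since $y \in Y$ was arbitrary, $f$ is a surjection.

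The step I expect to demand the most care is the first one, namely confirming that the structural data of conditions~(5)--(6) (the family $\{f_\xi\}$, the covering by the $V_\xi$, the mapping $g$ with its normalization $g(G_r) \equiv B_1^{Y^\ast}(0)$, and the subset $G_{0,r}$) can be arranged consistently for every radius $r$; this is implicit in the global hypothesis that Theorem~5 applies on $X$, but in particular one must check that $G_{0,r}$ may be taken inside $S_r^X(x_0)$, since this is precisely what activates the coercivity of $\nu$ in the estimate above. Once this bookkeeping is in place, the remainder is a direct duality estimate.
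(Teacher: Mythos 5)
Your argument is correct and is exactly the coercivity argument the paper treats as immediate: applying Theorem~5 on balls $B_{r}^{X}\left( x_{0}\right)$ of growing radius, the sets $M_{r}\subseteq f\left( B_{r}^{X}\left( x_{0}\right) \right)$ exhaust $Y$ because $\left\langle f\left( x\right) -f\left( x_{0}\right) ,g\left( x\right) \right\rangle \geq \nu \left( r\right) \rightarrow \infty$ while $\left\langle y-f\left( x_{0}\right) ,g\left( x\right) \right\rangle \leq \left\Vert y-f\left( x_{0}\right) \right\Vert _{Y}$ on the set where $\left\Vert g\left( x\right) \right\Vert _{Y^{\ast }}=1$. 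The bookkeeping point you flag (that $G_{0,r}$ can be taken on $S_{r}^{X}\left( x_{0}\right)$) is indeed the only delicate step, and it is covered by condition (6)'s requirement that $f$, $g$, $\nu$ satisfy the conditions of Theorem~1, where the set carried onto $S_{1}^{Y^{\ast }}\left( 0\right)$ lies on the boundary sphere.
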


\textbf{Now we will consider an other type of the comparison.}

a) Let $f:D\left( f\right) \subseteq X\longrightarrow X^{\ast }$ be a
bounded continuous mapping (i.e. $f\in C^{0}$) and $B_{r}^{X}\left( 0\right)
\subseteq D\left( f\right) $ is a closed ball, where $r>0$;

b) For each $x_{0}\in G\subseteq B_{r}^{X}\left( 0\right) $, $clG\equiv
B_{r}^{X}\left( 0\right) $ there are a neighbourhood $U_{\varepsilon }\left(
x_{0}\right) $ and a coercive demicontinuous monotone operators $%
f_{ix_{0}}:X\longrightarrow X^{\ast }$, ($i=1,2$) ([4-6, 21, 30]) such that 
\begin{equation*}
\left\langle f_{1x_{0}}\left( x\right) ,x\right\rangle \geq \left\langle
f\left( x\right) ,x\right\rangle \geq \left\langle f_{2x_{0}}\left( x\right)
,x\right\rangle \geq \nu \left( \left\Vert x\right\Vert _{X}\right)
\left\Vert x\right\Vert _{X}
\end{equation*}%
for any $x\in U_{\varepsilon }\left( x_{0}\right) $, where $\varepsilon \geq
\varepsilon _{0}>0$ and $\nu :R_{+}\longrightarrow R^{1}$ is a continuous
function as above, moreover the inequalities 
\begin{equation*}
\left\vert \left\langle f_{1x_{0}}\left( x_{0}\right) ,y\right\rangle
\right\vert \geq \left\vert \left\langle f\left( x_{0}\right)
,y\right\rangle \right\vert \geq \left\vert \left\langle f_{2x_{0}}\left(
x_{0}\right) ,y\right\rangle \right\vert
\end{equation*}%
hold for any $y\in S_{1}^{X}\left( 0\right) $;

c) There are a continuous function $k_{x_{0}}\left( x_{1},x_{2}\right) >0$
and a numbers $\lambda _{x_{0}},\rho _{x_{0}}\geq 0$ such that 
\begin{equation*}
\left\vert \left\langle f\left( x_{1}\right) -f\left( x_{2}\right)
-f_{0x_{0}}\left( x_{1}\right) +f_{0x_{0}}\left( x_{2}\right)
,y\right\rangle \right\vert \leq k_{x_{0}}\left( x_{1},x_{2}\right)
~\left\Vert f_{0x_{0}}\left( x_{1}\right) -f_{0x_{0}}\left( x_{2}\right)
\right\Vert _{X^{\ast }}
\end{equation*}%
for any $x_{1},x_{2}\in U_{\varepsilon }\left( x_{0}\right) $, $\forall y\in
S_{1}^{X}\left( 0\right) $ where $f_{0x_{0}}\left( x\right) \equiv \lambda
_{x_{0}}f_{1x_{0}}\left( x\right) +\rho _{x_{0}}f_{2x_{0}}\left( x\right) $,
\ $\forall x\in U_{\varepsilon }\left( x_{0}\right) $ and $\lambda
_{x_{0}}+\rho _{x_{0}}=1$.

\begin{theorem}
Let $X$ be a reflexive Banach space and the mapping $f$ satisfies conditions
a) - c). Then if $k_{x_{0}}\left( x_{1},x_{2}\right) <1$ then the statement
of Theorem 5 is true for the mapping $f$ on $B_{r}^{X}\left( 0\right) $.
\end{theorem}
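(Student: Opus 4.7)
The plan is to reduce the statement to Theorem 5 by exhibiting, around every $x_{0}\in G\subseteq B_{r}^{X}(0)$, a local comparison mapping $f_{x_{0}}$ which together with $f$ satisfies the hypotheses of Theorem 4, and to extract from b) a single global auxiliary mapping $g$ verifying condition (6). The natural candidate for the comparison mapping is the convex combination $f_{0x_{0}}(x)\equiv \lambda_{x_{0}}f_{1x_{0}}(x)+\rho_{x_{0}}f_{2x_{0}}(x)$ introduced in c), so that the error estimate in c) is literally the inequality (3.1) required for condition (v) of Theorem 4.

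First I would observe that $f_{0x_{0}}:X\longrightarrow X^{\ast}$ inherits demicontinuity, monotonicity and coercivity from $f_{1x_{0}},f_{2x_{0}}$, since these properties are preserved under nonnegative convex combinations; then by the standard Browder--Minty surjectivity theorem (see the references [4--6,21,30] cited in b)), $f_{0x_{0}}$ is surjective onto $X^{\ast}$ and its inverse is lower (respectively upper) semi-continuous, cf.\ Lemma 1 of [28] quoted in the proof of Theorem 2. In particular, for every $x_{0}$ one can pick an $\varepsilon_{0}$-neighbourhood $U_{\varepsilon_{0}}(x_{0})\subseteq B_{r}^{X}(0)$ and a subneighbourhood $V(x_{0})\subseteq U_{\varepsilon_{0}}(x_{0})$ with $f_{0x_{0}}(V(x_{0}))\equiv B_{\rho}^{X^{\ast}}(f_{0x_{0}}(x_{0}))$ for some $\rho>0$; this yields condition (iv) of Theorem 4.

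For condition (v) of Theorem 4, I would invoke c) directly: the assumed inequality provides (3.1) with modulus $k_{x_{0}}(\tau)\equiv k_{x_{0}}(x_{1},x_{2})\,\tau$; the hypothesis $k_{x_{0}}(x_{1},x_{2})<1$ then means that condition (v) holds with $m_{0}=1$ and $\sigma=\sup_{x_{1},x_{2}\in V(x_{0})}k_{x_{0}}(x_{1},x_{2})<1$. Thus Theorem 4 applies to the pair $(f,f_{0x_{0}})$ on each $V(x_{0})$, producing a $X^{\ast}$-ball inside $f(V(x_{0}))$; this delivers condition (5) of Theorem 5 with the family $\{f_{0x_{0}}\}_{x_{0}\in G_{1}}$ and $G_{1}$ a countable dense subset of $G$ chosen so that $\bigcup_{\xi\in G_{1}}V_{\xi}\supseteq G$.

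Finally I would construct the global $g$ required in condition (6) of Theorem 5 from the coercivity inequalities of b). Since $X$ is reflexive, one may use the normalised duality mapping $J:X\rightleftarrows X^{\ast}$ (chosen with $\langle x,Jx\rangle=\|x\|_{X}^{2}$, $\|Jx\|_{X^{\ast}}=\|x\|_{X}$), and set, for instance, $g(x)=J(x)/\|x\|_{X}$ on $B_{r}^{X}(0)\setminus\{0\}$, extended by $0$ at the origin, so that $g(B_{r}^{X}(0))\equiv B_{1}^{X^{\ast}}(0)$. The lower bound $\langle f(x),x\rangle\geq \nu(\|x\|_{X})\|x\|_{X}$ in b) then translates into $\langle f(x)-f(0),g(x)\rangle\geq \nu(\|x\|_{X})-\|f(0)\|_{X^{\ast}}$, which, together with the uniform bound on $\|f_{ix_{0}}\|$ implied by demicontinuity, yields the coercivity required in condition (6) (after a harmless shift of $x_{0}$ to the origin, using condition a)). The compatibility of $f$ and $g$ with the hypotheses of Theorem 1 on $G$ is then immediate. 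With (4), (5), (6) all verified, Theorem 5 delivers the conclusion.

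The main obstacle I anticipate is not the local step, which is essentially a repackaging of c), but the passage from the local comparison mappings $f_{0x_{0}}$ (which depend on $x_{0}$) to a single $g$ that simultaneously controls all of them on $B_{r}^{X}(0)$; ensuring that the $\nu$ and $\mu$ in the Theorem 1 data can be chosen independently of $x_{0}$ will require using the two-sided sandwiching $\langle f_{1x_{0}},x\rangle\geq \langle f,x\rangle\geq \langle f_{2x_{0}},x\rangle$ from b) to transfer the coercivity from the monotone comparison family to $f$ uniformly, rather than to work with each $f_{0x_{0}}$ separately.
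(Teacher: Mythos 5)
Your overall strategy is exactly the one the paper intends: the paper's entire ``proof'' of Theorem 6 is the single sentence that it suffices to verify the conditions of Theorem 5, and your reduction --- taking $f_{0x_{0}}\equiv\lambda_{x_{0}}f_{1x_{0}}+\rho_{x_{0}}f_{2x_{0}}$ as the local comparison map so that c) becomes condition (v) of Theorem 4, and building a global $g$ from b) for condition (6) --- is the natural way to fill that in. However, two steps of your write-up do not go through as stated. First, your choice $g(x)=J(x)/\left\Vert x\right\Vert _{X}$ sends every $x\neq 0$ to an element of norm one, so $g\bigl(B_{r}^{X}(0)\bigr)$ lies in the unit sphere of $X^{\ast}$ and cannot satisfy $g\bigl(B_{r}^{X}(0)\bigr)\equiv B_{1}^{X^{\ast}}(0)$, nor can $g^{-1}$ be continuous as Theorem 1/condition (6) requires; the correct normalisation is $g(x)=J(x)/r$ (or simply $g(x)=x/r$, since here $Y=X^{\ast}$ and $Y^{\ast}=X$), which is $0$ at the origin, fills the unit ball, and maps $S_{r}^{X}(0)$ onto $S_{1}^{Y^{\ast}}(0)$. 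Even then the passage from $\left\langle f(x),x\right\rangle\geq\nu(\left\Vert x\right\Vert)\left\Vert x\right\Vert$ to $\left\langle f(x)-f(0),g(x)\right\rangle\geq\widetilde{\nu}(\left\Vert x\right\Vert)$ with $\widetilde{\nu}(r)\geq\delta_{0}>0$ costs you a term $\left\Vert f(0)\right\Vert_{X^{\ast}}\left\Vert x\right\Vert/r$ that the hypotheses a)--c) do not obviously absorb; calling this ``a harmless shift'' hides a real condition on $\nu(r)$ versus $\left\Vert f(0)\right\Vert_{X^{\ast}}$.

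Second, your verification of condition (iv) of Theorem 4 leans on Browder--Minty plus the assertion that the inverse of a coercive demicontinuous monotone operator is lower or upper semi-continuous and that $f_{0x_{0}}(V(x_{0}))$ is exactly a ball inside the image of $U_{\varepsilon_{0}}(x_{0})$. Monotone, demicontinuous and coercive gives global surjectivity, but not strict monotonicity, so $f_{0x_{0}}^{-1}$ may be genuinely multivalued, and nothing guarantees local openness of $f_{0x_{0}}$ at $x_{0}$ or that the preimage of a small ball around $f_{0x_{0}}(x_{0})$ stays inside $U_{\varepsilon_{0}}(x_{0})$; this needs either strict monotonicity, a local injectivity/properness argument, or an appeal to the semicontinuity results of the type the paper cites, and should be argued rather than asserted. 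A smaller point: the hypothesis is the pointwise bound $k_{x_{0}}(x_{1},x_{2})<1$, and your $\sigma=\sup_{V(x_{0})}k_{x_{0}}$ need not be $<1$ on a non-compact neighbourhood; the fix is to use continuity of $k_{x_{0}}$ at the diagonal point $(x_{0},x_{0})$ and shrink $V(x_{0})$ so that $k_{x_{0}}\leq k_{x_{0}}(x_{0},x_{0})+\epsilon<1$ there. With these repairs your argument matches the paper's intended reduction; as the paper supplies no details of its own, these are gaps you must close yourself rather than deviations from a written proof.
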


For the proof it is enough to show that all conditions of Theorem 5 are
fulfilled under conditions of this theorem.

\begin{remark}
If in this theorem we have $D\left( f\right) \equiv X$ and all conditions of
this theorem is fulfilled on $X$ then the statement of Theorem 6 is true on $%
X$, in addition if the condition of the previous Corollary is fulfilled then 
$f\left( X\right) \equiv X^{\ast }$.
\end{remark}

\section{\protect\bigskip On Applications of the General Results to the
Nonlinear Boundary Problems}

\subsection{Dirichlet problem for quasilinear elliptic equations\textbf{\ }}

Let $\Omega \subset R^{d}$, $d\geq 1$ be a bounded domain with sufficiently
smooth boundary $\partial \Omega $. We will consider the following problem 
\begin{eqnarray}
f\left( u\right) &\equiv &-\underset{i=1}{\overset{d}{\sum }}~D_{i}\left(
a_{i}\left( x,u,Du\right) D_{i}u\right) =h\left( x\right) ,\quad x\in \Omega
\TCItag{4.1} \\
u\left\vert ~_{\partial \Omega }\right. &=&0,\qquad D_{i}\equiv \frac{%
\partial }{\partial x_{i}},\ D\equiv \left( D_{1},...,D_{d}\right) .  \notag
\end{eqnarray}%
here $a_{i}\left( x,\xi ,\eta \right) $ ($i=\overline{1,d}$), $h\left(
x\right) $ be some functions.

We consider the following conditions.

7) Let $a_{i}\left( x,\xi ,\eta \right) $ ($i=\overline{1,d}$) be a
Caratheodory function for $\left( x,\xi ,\eta \right) \in \Omega \times
R^{d+1}$, and there exists a continuous function $\varphi
:R^{1}\longrightarrow R_{+}^{1}$ such that $\Phi :L_{p}\left( \Omega \right)
\longrightarrow L_{q}\left( \Omega \right) $ is a bounded continuous
mapping, $p,q\geq 1$, where $\Phi \left( \xi \right) \equiv \underset{0}{%
\overset{\xi }{\dint }}\varphi \left( \tau \right) d\tau $;

8) For a.e. $u_{0}\in \overset{0}{W}~_{p}^{1}\left( \Omega \right) $ there
exist a functions $b_{i}^{u_{0}}\left( x\right) $, $c_{i}^{u_{0}}\left(
x\right) \geq 0$, a number $\varepsilon \left( u_{0}\right) \geq \varepsilon
_{0}>0$ and a neighbourhood $B_{\varepsilon \left( u_{0}\right)
}^{W_{p}^{1}}\left( u_{0}\left( x\right) \right) $ such that 
\begin{equation*}
~b_{i}^{u_{0}}\left( x\right) \varphi \left( \eta _{i}\right) \geq
a_{i}\left( x,\xi ,\eta \right) \geq c_{i}^{u_{0}}\left( x\right) \varphi
\left( \eta _{i}\right)
\end{equation*}%
for any $\left( \xi ,\eta \right) \in R^{1}\times R^{d}$ and a.e. $x\in
\Omega $;

From here follows, that the following expressions generates a monotone
operators on a respective spaces 
\begin{equation*}
f_{1}^{u_{0}}\left( u\right) \equiv -\underset{i=1}{\overset{d}{\sum }}~D_{i}%
\left[ b_{i}^{u_{0}}\left( x\right) \varphi \left( D_{i}u\right) D_{i}u%
\right] ,~\&
\end{equation*}%
\begin{equation*}
f_{2}^{u_{0}}\left( u\right) \equiv -\underset{i=1}{\overset{d}{\sum }}~D_{i}%
\left[ c_{i}^{u_{0}}\left( x\right) \varphi \left( D_{i}u\right) D_{i}u%
\right] .
\end{equation*}

9) there exist a continuous function $k\left( u_{0},\xi ,\eta \right) >0$
and numbers $\lambda \left( u_{0}\right) $, $\rho \left( u_{0}\right) \geq 0$
such that $\lambda \left( u_{0}\right) +\rho \left( u_{0}\right) =1$ and 
\begin{equation*}
a_{i}\left( x,\xi ,\eta \right) \eta _{i}-a_{i}\left( x,\widehat{\xi },%
\widehat{\eta }\right) \widehat{\eta }_{i}=k\left( u_{0},\xi ,\eta ,\widehat{%
\xi },\widehat{\eta }\right) \left[ \varphi \left( \eta _{i}\right) \eta
_{i}-\varphi \left( \widehat{\eta }_{i}\right) \widehat{\eta }_{i}\right]
\end{equation*}%
and 
\begin{equation*}
\left\vert k\left( u_{0},\xi ,\eta ,\widehat{\xi },\widehat{\eta }\right)
-\lambda \left( u_{0}\right) b_{i}^{u_{0}}\left( x\right) -\rho \left(
u_{0}\right) c_{i}^{u_{0}}\left( x\right) \right\vert <1
\end{equation*}%
for any $\left( \xi ,\eta \right) ,\left( \widehat{\xi },\widehat{\eta }%
\right) \in R^{1}\times R^{d}$.

\begin{theorem}
Let conditions 7) - 9) are fulfilled then the problem (4.1) is solvable in
the space $\overset{0}{W}~_{p}^{1}\left( \Omega \right) $ for any $h\in
W_{q}^{-1}\left( \Omega \right) $, $q=\frac{p}{p-1}$.
\end{theorem}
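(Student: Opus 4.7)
The plan is to recast the Dirichlet problem \eqref{4.1} as an operator equation $f(u)=h$ in $X\equiv\overset{0}{W}{}_{p}^{1}(\Omega )$ with $X^{\ast }\equiv W_{q}^{-1}(\Omega )$, and then verify that the mapping $f$ satisfies conditions a)--c) of Theorem 6 (globally on $X$), together with the coercivity assumption of the remark following that theorem, so that surjectivity $f(X)=X^{\ast }$ gives solvability for every $h\in W_{q}^{-1}(\Omega )$.

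First I would check condition a): using the Caratheodory hypothesis and the growth control on $a_{i}$ coming from condition 7) (together with the pointwise bound $|a_{i}(x,\xi ,\eta )|\leq b_{i}^{u_{0}}(x)\varphi (\eta _{i})$ from 8)), the Nemytskii operator $(x,u,Du)\mapsto a_{i}(x,u,Du)D_{i}u$ factors through the bounded continuous map $\Phi :L_{p}(\Omega )\to L_{q}(\Omega )$; integration by parts against a test function in $\overset{0}{W}{}_{p}^{1}(\Omega )$ then shows $f:\overset{0}{W}{}_{p}^{1}(\Omega )\to W_{q}^{-1}(\Omega )$ is well defined, bounded and continuous.

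For condition b) of Theorem 6, at each $u_{0}\in \overset{0}{W}{}_{p}^{1}(\Omega )$ I take $f_{1u_{0}}$ and $f_{2u_{0}}$ to be the operators defined in 8). Each of them is of $\varphi $-Laplacian type with bounded coefficients, hence coercive, demicontinuous and monotone by the standard theory (the displayed $\varphi (\eta _{i})\eta _{i}$ combined with nonnegativity of $b_{i}^{u_{0}},c_{i}^{u_{0}}$ gives monotonicity, and the lower bound in 8) gives the coercivity estimate $\langle f_{2u_{0}}(u),u\rangle \geq \nu (\Vert u\Vert )\Vert u\Vert $). The ordering inequalities in b) between $\langle f_{1u_{0}}(u),u\rangle $, $\langle f(u),u\rangle $, $\langle f_{2u_{0}}(u),u\rangle $ follow by multiplying the two-sided bound in 8) by $D_{i}u\cdot D_{i}u$, integrating over $\Omega $ and using $\varphi \geq 0$; the bound for duality pairings against $y\in S_{1}^{X}(0)$ is obtained the same way with $D_{i}u\cdot D_{i}y$, since $|a_{i}D_{i}u_{0}D_{i}y|$ is bounded above and below by the same expressions with $b_{i}^{u_{0}}$ and $c_{i}^{u_{0}}$ in place of $a_{i}$.

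The main work is condition c), and this is where 9) is exactly tailored. Set $f_{0u_{0}}\equiv \lambda (u_{0})f_{1u_{0}}+\rho (u_{0})f_{2u_{0}}$ with $\lambda +\rho =1$. For $u_{1},u_{2}\in U_{\varepsilon }(u_{0})$ the integrand
\begin{equation*}
a_{i}(x,u_{1},Du_{1})D_{i}u_{1}-a_{i}(x,u_{2},Du_{2})D_{i}u_{2}
\end{equation*}
is by the first identity of 9) equal to $k(u_{0},\cdot )[\varphi (D_{i}u_{1})D_{i}u_{1}-\varphi (D_{i}u_{2})D_{i}u_{2}]$, while the corresponding integrand for $f_{0u_{0}}(u_{1})-f_{0u_{0}}(u_{2})$ is $[\lambda b_{i}^{u_{0}}+\rho c_{i}^{u_{0}}][\varphi (D_{i}u_{1})D_{i}u_{1}-\varphi (D_{i}u_{2})D_{i}u_{2}]$. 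Subtracting, pairing against $y\in S_{1}^{X}(0)$, and integrating, the second estimate in 9) gives exactly
\begin{equation*}
\bigl|\langle f(u_{1})-f(u_{2})-f_{0u_{0}}(u_{1})+f_{0u_{0}}(u_{2}),y\rangle \bigr|\leq k_{u_{0}}(u_{1},u_{2})\,\Vert f_{0u_{0}}(u_{1})-f_{0u_{0}}(u_{2})\Vert _{X^{\ast }}
\end{equation*}
with $k_{u_{0}}<1$, which is condition c).

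Once a)--c) are verified on every ball $B_{r}^{X}(0)$, Theorem 6 yields that $f(B_{r}^{X}(0))$ is a bodily subset of $X^{\ast }$ containing the explicit set described in Theorem 5. The coercivity in 8) guarantees $\nu (\Vert u\Vert )\Vert u\Vert \nearrow \infty $ as $\Vert u\Vert \nearrow \infty $, so the remark after Theorem 6 gives $f(X)\equiv X^{\ast }=W_{q}^{-1}(\Omega )$, proving solvability of \eqref{4.1} for every $h\in W_{q}^{-1}(\Omega )$. The hardest part of the verification, as expected, is condition c): one must convert the pointwise identity of 9) into a duality estimate on $W_{q}^{-1}(\Omega )$ uniformly on a neighborhood of $u_{0}$, and check that the compared operator $f_{0u_{0}}=\lambda f_{1u_{0}}+\rho c_{i}^{u_{0}}f_{2u_{0}}$ has norm of its increment dominating the left-hand side with a coefficient strictly less than one.
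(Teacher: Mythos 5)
Your proposal is correct and follows essentially the same route as the paper: reduce (4.1) to the operator equation for $f:\overset{0}{W}{}_{p}^{1}(\Omega )\to W_{q}^{-1}(\Omega )$ and verify the hypotheses of Theorem 6, with the comparison operator $f_{0}^{u_{0}}=\lambda (u_{0})f_{1}^{u_{0}}+\rho (u_{0})f_{2}^{u_{0}}$ and the pointwise identity of condition 9) yielding condition c) with coefficient strictly less than one. You merely spell out conditions a) and b) (which the paper dismisses as obvious) and the final surjectivity step in more detail; aside from the harmless typo $\lambda f_{1u_{0}}+\rho c_{i}^{u_{0}}f_{2u_{0}}$ in your last sentence, the argument matches the paper's.
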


\begin{proof}
For the proof it is enough to show that all conditions of Theorem 6 are
fulfilled. Namely it is enough to show that condition c) is true for the
mapping $f:\overset{0}{W}~_{p}^{1}\left( \Omega \right) \longrightarrow
W_{q}^{-1}\left( \Omega \right) $ determined by problem (4.1) and the lather
defined system of mappings, if exactly for the mappings 
\begin{equation*}
f,\left\{ f_{1}^{u_{0}}\left\vert ~u_{0}\in \right. M\right\} ,\left\{
f_{2}^{u_{0}}\left\vert ~u_{0}\in \right. M\right\} :\overset{0}{W}%
~_{p}^{1}\left( \Omega \right) \longrightarrow W_{q}^{-1}\left( \Omega
\right)
\end{equation*}%
where $M\subseteq \overset{0}{W}~_{p}^{1}\left( \Omega \right) $ and $%
\overline{M}^{W_{p}^{1}}\equiv \overset{0}{W}~_{p}^{1}\left( \Omega \right) $%
, as the fulfilment of the other conditions are obviously by virtue of the
selection of the systems $\left\{ f_{j}^{u_{0}}\left\vert ~u_{0}\in \right.
M,\ j=1,2\right\} $.

Now we will define the above systems. Let $u_{0}\in \overset{0}{W}%
~_{p}^{1}\left( \Omega \right) $ be such element as in the condition 8).
Thus if we choose the mapping $f_{0}^{u_{0}}$\ on the ball $B_{\varepsilon
\left( u_{0}\right) }^{W_{p}^{1}}\left( u_{0}\left( x\right) \right) $, for
any $u_{0}\in M$ of the following form 
\begin{equation*}
f_{0}^{u_{0}}\left( u\right) \equiv -\underset{i=1}{\overset{d}{\sum }}~D_{i}%
\left[ a_{i}^{u_{0}}\left( x,u,Du\right) D_{i}u\right] ,
\end{equation*}%
where 
\begin{equation*}
a_{i}^{u_{0}}\left( x,\xi ,\eta \right) \eta _{i}=\lambda \left(
u_{0}\right) b_{i}^{u_{0}}\left( x\right) \varphi \left( \eta _{i}\right)
\eta _{i}+\rho \left( u_{0}\right) c_{i}^{u_{0}}\left( x\right) \varphi
\left( \eta _{i}\right) \eta _{i}=
\end{equation*}%
\begin{equation*}
\left[ \lambda \left( u_{0}\right) \widetilde{b}_{i}^{u_{0}}\left( x\right)
+\rho \left( u_{0}\right) \widetilde{c}_{i}^{u_{0}}\left( x\right) \right]
\varphi \left( \eta _{i}\right) \eta _{i}
\end{equation*}%
then we have $f_{0}^{u_{0}}:$ $\overset{0}{W}~_{p}^{1}\left( \Omega \right)
\longrightarrow W_{q}^{-1}\left( \Omega \right) $ is a continuous mapping
and the condition c) is fulfilled as 
\begin{equation*}
\left[ a_{i}\left( x,\xi ,\eta \right) \eta _{i}-a_{i}\left( x,\widehat{\xi }%
,\widehat{\eta }\right) \widehat{\eta }_{i}\right] -\left[
a_{i}^{u_{0}}\left( x,\xi ,\eta \right) \eta _{i}-a_{i}^{u_{0}}\left( x,%
\widehat{\xi },\widehat{\eta }\right) \widehat{\eta }_{i}\right] =
\end{equation*}%
\begin{equation*}
\left[ k\left( u_{0},\xi ,\eta ,\widehat{\xi },\widehat{\eta }\right)
-\lambda \left( u_{0}\right) b_{i}^{u_{0}}\left( x\right) -\rho \left(
u_{0}\right) c_{i}^{u_{0}}\left( x\right) \right] \left[ \varphi \left( \eta
_{i}\right) \eta _{i}-\varphi \left( \widehat{\eta }_{i}\right) \widehat{%
\eta }_{i}\right] .
\end{equation*}%
Really from here we obtain the fulfillment of the condition c) with using
the conditions 8), 9). Consequently all conditions of Theorem 6 hold for the
mapping $f$ with $\left\{ f_{0}^{u_{0}}\left\vert ~u_{0}\in \right.
M\right\} $ what proved this Theorem.
\end{proof}

It should be noted that with using this approach can be considered more
general nonlinear boundary problem than the problem (4.1).

\subsection{On some modification of the Navier-Stokes equations}

Now we will consider the following problem on the $Q\equiv \Omega \times
\left( 0,T\right) $ 
\begin{equation}
\frac{\partial u}{\partial t}-\nu \Delta u-div~\left[ 2\varphi \left(
s\left( u\right) \right) Bu\right] +\overset{3}{\underset{i=1}{\sum }}%
~u_{i}D_{i}u+grad~p=h\left( t,x\right) ,  \tag{4.2}
\end{equation}%
\begin{equation}
div~u=0,\quad u\left\vert ~_{\partial \Omega \times \left[ 0,T\right]
}\right. =0,\quad u\left( 0,x\right) =0,\quad x\in \Omega \subset R^{3}, 
\tag{4.3}
\end{equation}%
where the domain $\Omega \subset R^{3}$ is such as above, $u\equiv \left(
u_{1},u_{2},u_{3}\right) $, $\nu >0$ is the known physics constant, $\Delta $
is a Laplacian, $h\left( t,x\right) $ is some function and 
\begin{equation*}
B\left( u\right) \equiv \left( b_{ij}\left( u\right) \right)
_{i,j=1,2,3},b_{ij}\left( u\right) \equiv \frac{1}{2}\left(
D_{i}u_{j}+D_{j}u_{i}\right) ,~s\left( u\right) \equiv \left[ \overset{3}{%
\underset{i,j=1}{\sum }}b_{ij}\left( u\right) \right] ^{\frac{1}{2}}.
\end{equation*}

As usually we introduce the following known space ([24]) 
\begin{equation*}
V\left( \Omega \right) \equiv \left\{ u\left( x\right) \left\vert ~u\in
\left( \overset{0}{W}~_{2}^{1}\left( \Omega \right) \right) ^{3},\quad
div~u=0\right. \right\} ,~u\equiv \left( u_{1},u_{2},u_{3}\right) ,
\end{equation*}%
\begin{equation*}
L_{2}\left( 0,T;V\left( \Omega \right) \right) \equiv \left\{ u\left(
t,x\right) \left\vert ~u_{i}\in L_{2}\left( 0,T;\overset{0}{W}%
~_{2}^{1}\left( \Omega \right) \right) \right. ,~div~u=0\right\} .
\end{equation*}

It should be noted that the modification of the Navier-Stokes equations of
such type earlier was suggested and investigated under some conditions on
the third term by Ladyzhenskaya [23], after the problem of such type were
studied by other authors [23, 27, 12, 31] etc. In these works the third term
are selected such that, the considered problem were uniquely solvable,
furthermore essentially are studied certain stationary case of such problem.
Moreover Navier-Stokes equations and their various modification were studied
and now many authors conduct certain investigation of the problems of such
type, which are connected with some problem for the different modification
of the Navier-Stokes equations (see, for example in the [14, 9, 29] and
their references). Here we investigate this problem in the stationary and
nonstationary cases, and else under the different conditions on the third
term.

So let the following conditions are fulfilled.

10) $\varphi \in C^{0}$ and there is a number $\delta >0$ such that if $u\in
L_{2}\left( 0,T;V\left( \Omega \right) \right) $ then $\left\Vert \varphi
\left( s\left( u\right) \right) \right\Vert _{L^{\infty }}\leq \frac{\nu }{3}%
-\delta $;

11) there is a number $\mu \geq 0$ such that 
\begin{equation*}
\left\vert \varphi \left( t\right) -\varphi \left( \tau \right) \right\vert
\left\vert t\right\vert \leq \left\vert \mu +\varphi \left( t\right)
\right\vert ~\left\vert t-\tau \right\vert ,\quad 0\leq \mu \leq \frac{\nu }{%
3}-\delta
\end{equation*}%
for any $t,\tau \in R^{1}$;

For instance, $\varphi \left( \tau \right) =c_{0}\tau ^{\rho }\left( \tau
^{\sigma }+c_{1}\right) ^{-1}$, for $\tau \geq 0$ where $\rho ,\sigma \geq 1$%
, $\sigma \geq \rho $ and $c_{i}\geq 0$, $i=0,1$.

\subsubsection{The stationary case.}

In this case the considered problem is equivalent to the following
functional equation 
\begin{equation}
\left\langle f\left( u\right) ,v\right\rangle \equiv \left\langle -\nu
\Delta u-div~\left[ 2\varphi \left( s\left( u\right) \right) Bu\right] +%
\overset{3}{\underset{i=1}{\sum }}~u_{i}D_{i}u,v\right\rangle =\left\langle
h,v\right\rangle  \tag{4.2`}
\end{equation}%
for any $v\in V\left( \Omega \right) $.

For investigation we will use Theorem 2 (really - Lemma 1 and Corollary 1)
in the case when the considered mapping is single-valued. Therefore it is
needed to show fulfillment of all conditions of Theorem 2 for the operator
generated by this problem, here to this end the perturbation $f_{1}$ is
determined in the form 
\begin{equation*}
f_{1}\left( u\right) \equiv \left( f_{1}^{1}\left( u\right) ,f_{1}^{2}\left(
u\right) ,f_{1}^{3}\left( u\right) \right) \equiv \left( \overset{3}{%
\underset{i=1}{\sum }}~u_{i}D_{i}u_{1},\overset{3}{\underset{i=1}{\sum }}%
~u_{i}D_{i}u_{2},\overset{3}{\underset{i=1}{\sum }}~u_{i}D_{i}u_{3}\right) .
\end{equation*}

It is easly to see that 
\begin{equation*}
\left\langle f\left( u\right) ,u\right\rangle \equiv \left\langle -\nu
\Delta u-div~\left[ 2\varphi \left( s\left( u\right) \right) Bu\right] +%
\overset{3}{\underset{i=1}{\sum }}~u_{i}D_{i}u,u\right\rangle =
\end{equation*}%
\begin{equation*}
\nu \left\Vert u\right\Vert _{V\left( \Omega \right) }^{2}+\left\langle
\varphi \left( s\left( u\right) \right) Bu,Bu\right\rangle
\end{equation*}%
holds for any $u\in V\left( \Omega \right) $. And from here with use the
condition 10 
\begin{equation*}
\left\langle f\left( u\right) ,u\right\rangle \geq \frac{2\nu }{3}\left\Vert
u\right\Vert _{V\left( \Omega \right) }^{2},\quad \forall u\in V\left(
\Omega \right) .
\end{equation*}%
Consequently we need to prove fulfillment of the second inequation of
Theorem 2. First we will consider the second term of the left part (4.2`) 
\begin{equation*}
\left\langle -div~\left[ 2\varphi \left( s\left( u\right) \right) Bu\right]
+div~\left[ 2\varphi \left( s\left( v\right) \right) Bv\right]
,u-v\right\rangle =
\end{equation*}%
\begin{equation*}
\left\vert \left\langle \varphi \left( s\left( u\right) \right) Bu-\varphi
\left( s\left( v\right) \right) Bv,B\left( u-v\right) \right\rangle
\right\vert \leq \left\vert \left\langle \left[ \varphi \left( s\left(
u\right) \right) -\varphi \left( s\left( v\right) \right) \right] Bu,B\left(
u-v\right) \right\rangle \right\vert +
\end{equation*}%
\begin{equation*}
\left\vert \left\langle \varphi \left( s\left( v\right) \right) B\left(
u-v\right) ,B\left( u-v\right) \right\rangle \right\vert \text{ and by
condition 11 }\leq
\end{equation*}%
\begin{equation*}
\leq \left\langle \left\vert \mu +\varphi \left( s\left( u\right) \right)
\right\vert B\left( u-v\right) ,B\left( u-v\right) \right\rangle +\left\Vert
\varphi \left( s\left( v\right) \right) \right\Vert _{L^{\infty }}\left\Vert
B\left( u-v\right) \right\Vert _{2}^{2}\leq
\end{equation*}%
\begin{equation}
\left[ \mu +2\left\Vert \varphi \left( s\left( u\right) \right) \right\Vert
_{L^{\infty }}\right] \left\Vert B\left( u-v\right) \right\Vert _{2}^{2}\leq
\left( \nu -3\delta \right) \left\Vert Bw\right\Vert _{\left( L_{2}\right)
^{3}}^{2},~\forall u\in V\left( \Omega \right) .  \tag{*}
\end{equation}

Now we will show a lower estimate for the expression $f(u)-f(v)$ with use of
the inequation (*) 
\begin{equation*}
\left\Vert f(u)-f(v)\right\Vert _{V^{\ast }}\equiv \left\Vert -\nu \Delta
\left( u-v\right) -div\left[ 2\varphi \left( s\left( u\right) \right)
Bu-2\varphi \left( s\left( v\right) \right) Bv\right] \right. +
\end{equation*}%
\begin{equation*}
\left. \overset{3}{\underset{i=1}{\sum }}\left[ u_{i}D_{i}u-v_{i}D_{i}v%
\right] \right\Vert _{V^{\ast }}\geq \nu \left\Vert u-v\right\Vert
_{V}-\left\Vert div\left[ 2\varphi \left( s\left( u\right) \right)
Bu-2\varphi \left( s\left( v\right) \right) Bv\right] \right\Vert _{V^{\ast
}}-
\end{equation*}%
\begin{equation*}
\left\Vert \overset{3}{\underset{i=1}{\sum }}~\left[ u_{i}u-v_{i}v\right]
\right\Vert _{\left( L_{2}\right) ^{3}}\geq \nu \left\Vert w\right\Vert
_{V}-\left( \nu -3\delta \right) \left\Vert Bw\right\Vert _{\left(
L_{2}\right) ^{3}}-
\end{equation*}%
\begin{equation*}
\frac{1}{2}\left( \left\Vert u\right\Vert _{\left( L_{4}\right)
^{3}}+\left\Vert v\right\Vert _{\left( L_{4}\right) ^{3}}\right) \left\Vert
w\right\Vert _{\left( L_{4}\right) ^{3}}=
\end{equation*}%
\begin{equation}
3\delta \left\Vert w\right\Vert _{V}-\frac{1}{2}\left( \left\Vert
u\right\Vert _{\left( L_{4}\right) ^{3}}+\left\Vert v\right\Vert _{\left(
L_{4}\right) ^{3}}\right) \left\Vert w\right\Vert _{\left( L_{4}\right)
^{3}}.  \tag{4.4}
\end{equation}

From here follows, that the condition (\textit{iii}) of Theorem 2 is
satisfied for the considered problem. Cosequently we obtained, that all
conditions of Theorem 2 are fulfilled in case of Corollary 1. So we will
show that the image $f\left( B_{r}\left( 0\right) \right) $ is a closed
subset of $V^{\ast }\left( \Omega \right) $ with use of this inequality. It
should be noted that the imbedding $\overset{0}{W}~_{2}^{1}\left( \Omega
\right) \subset L_{4}\left( \Omega \right) $ is compact, and from (4.4)
follows 
\begin{equation}
\left\Vert f(u)-f(v)\right\Vert _{V^{\ast }}+\frac{1}{2}\left( \left\Vert
u\right\Vert _{\left( L_{4}\right) ^{3}}+\left\Vert v\right\Vert _{\left(
L_{4}\right) ^{3}}\right) \left\Vert w\right\Vert _{\left( L_{4}\right)
^{3}}\geq 3\delta \left\Vert w\right\Vert _{V}.  \tag{4.5}
\end{equation}

Let $\left\{ h_{m}\right\} _{m=1}^{\infty }\subset f\left( B_{r}\left(
0\right) \right) $ and $h_{m}\Longrightarrow h$ in $V^{\ast }\left( \Omega
\right) $, then $f^{-1}\left( \left\{ h_{m}\right\} _{m=1}^{\infty }\right)
\subseteq $ $B_{r}\left( 0\right) $ and is bounded subset of $V\left( \Omega
\right) $. Consequently there exists sequence $\left\{ u_{m_{k}}\right\}
_{k=1}^{\infty }\subset $ $B_{r}\left( 0\right) $ such that $f\left(
u_{m_{k}}\right) =h_{m_{k}}$ and $u_{m_{k}}\rightharpoonup u$ in $V\left(
\Omega \right) $ and therefore $u_{m_{k}}\Longrightarrow u$ in $\left(
L_{4}\left( \Omega \right) \right) ^{3}$. Hence we obtain, that the image $%
f\left( B_{r}\left( 0\right) \right) $ is a closed subset of $V^{\ast
}\left( \Omega \right) $ by virtue of (4.5).

Thus the following solvability theorem is proved.

\begin{theorem}
Let conditions 10 and 11 are fulfilled then the stationary Navier-Stokes
problem of the (4.2)-(4.3) type is solvable in the $V\left( \Omega \right)
\times L_{2}\left( \Omega \right) $ for any $h\in V^{\ast }\left( \Omega
\right) $.
\end{theorem}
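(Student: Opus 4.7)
The strategy is to recast the stationary problem as the operator equation $f(u)=h$ in $V(\Omega)$ and invoke Theorem~2 via Corollary~1, with the splitting
\[
f_{0}(u) \equiv -\nu \Delta u - \operatorname{div}\bigl[2\varphi(s(u))Bu\bigr], \qquad f_{1}(u) \equiv \sum_{i=1}^{3} u_{i} D_{i} u,
\]
taking $X \equiv V(\Omega)$, $X^{\ast} \equiv V^{\ast}(\Omega)$, and auxiliary Banach space $Z \equiv (L_{4}(\Omega))^{3}$. The compact embedding $V(\Omega) \hookrightarrow Z$ (Rellich--Kondrachov in dimension three) is what makes the Corollary~1 form of condition~(iii) available here.

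First I would record the boundedness and continuity of $f_{0}, f_{1}: V(\Omega) \to V^{\ast}(\Omega)$, which follow from condition~10 together with the Sobolev embedding $\overset{0}{W}{}_{2}^{1}(\Omega) \hookrightarrow L_{4}(\Omega)$. The coercivity requirement of Theorem~2(ii) has essentially been verified in the excerpt: condition~10 combined with the skew-symmetry of the convective term in $V(\Omega)$ yields $\langle f(u),u\rangle \geq (2\nu/3)\|u\|_{V}^{2}$, so one may take $\nu(\tau)=(2\nu/3)\tau$ with $\nu(r_{0})\geq \delta_{0}>0$ for every $r_{0}>0$; the majorant $\mu$ is produced by standard $(W_{2}^{1})^{3}$--$(L_{4})^{3}$ bounds on $f_{1}$. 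The difference estimate~(iii) is exactly the content of (4.4)--(4.5): condition~11 absorbs the variation of $\varphi(s(\cdot))$ into the principal $-\nu\Delta$ term, and rewriting (4.5) as
\[
\|f(u)-f(v)\|_{V^{\ast}} + \tfrac{1}{2}\bigl(\|u\|_{Z}+\|v\|_{Z}\bigr)\,\|u-v\|_{Z} \geq 3\delta\,\|u-v\|_{V}
\]
fits the hypothesis of Corollary~1 with $k_{1}=3\delta$ and $\psi(\tau,r_{0})=Cr_{0}\tau$ on $B_{r_{0}}^{V}(0)$.

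Applying Corollary~1 on each ball $B_{r_{0}}^{V}(0)$ delivers that $f(B_{r_{0}}^{V}(0))$ is a closed bodily subset of $V^{\ast}(\Omega)$ containing the set $M^{\ast}$ of Theorem~2, the closedness being immediate from (4.5) and the compactness $V\hookrightarrow Z$, exactly as indicated after (4.5). The coercivity bound then forces $M^{\ast}$ to contain every $h\in V^{\ast}$ with $\|h\|_{V^{\ast}}\leq c\,r_{0}$ for some constant $c>0$ independent of $r_{0}$; letting $r_{0}\to\infty$ yields $f(V(\Omega))=V^{\ast}(\Omega)$, i.e.\ solvability of the functional equation for every $h\in V^{\ast}(\Omega)$, and the associated pressure $p\in L_{2}(\Omega)$ is recovered from the velocity $u$ by the classical de~Rham lemma. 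The principal obstacle I expect is verifying that $f_{0}$ qualifies as an \emph{interior} (open) mapping in the sense of condition~(i) of Theorem~2: since $f_{0}$ is nonlinear through the factor $\varphi(s(u))$, openness cannot simply be read off a linear isomorphism and must be extracted from the strict monotonicity implicit in conditions~10--11, for instance by applying Browder--Minty surjectivity and local invertibility to the auxiliary coercive monotone operator $u\mapsto -\nu\Delta u - \operatorname{div}[2\varphi(s(u))Bu]$.
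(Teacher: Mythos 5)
Your proposal follows essentially the same route as the paper: the same splitting with the convective term as the perturbation $f_{1}$, coercivity $\langle f(u),u\rangle\geq\tfrac{2\nu}{3}\|u\|_{V}^{2}$ from condition 10, the difference estimate (4.4)--(4.5) from condition 11, and the compact embedding $V(\Omega)\subset(L_{4}(\Omega))^{3}$ to get closedness of $f(B_{r}(0))$ via Corollary 1 of Theorem 2. Your added remarks (the explicit $r_{0}\to\infty$ surjectivity step, the de Rham recovery of the pressure, and the need to check openness of $f_{0}$, e.g.\ by strong monotonicity and Browder--Minty) only make explicit points the paper leaves implicit, so the argument is correct and matches the paper's proof.
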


\begin{remark}
It should be noted that a theorem of such type is true under the following
conditions.

12) let $\varphi \in C^{0}$ , $\varphi \left( \tau \right) \geq 0$ and $%
\left\Vert \varphi \left( s\left( u\right) \right) \right\Vert _{L^{\infty
}}\leq \mu $ for any $u\in V\left( \Omega \right) $ and $\tau \in R^{1}$, $%
\mu >0$ is a constant.

13) $\left\langle \varphi \left( s\left( u\right) \right) Bu-\varphi \left(
s\left( v\right) \right) Bv,B\left( u-v\right) \right\rangle \geq 0$ for any 
$u,v\in V\left( \Omega \right) $.
\end{remark}

\subsubsection{The problem (4.2)-(4.3)}

We denote by $F:W\left( Q\right) \longrightarrow L_{2}\left( 0,T;V^{\ast
}\left( \Omega \right) \right) $ the operator generated by problem
(4.2)-(4.3) 
\begin{equation*}
F\left( u\right) \equiv \frac{\partial u}{\partial t}+f\left( u\right)
,\quad \forall u\in W\left( Q\right) \cap \left\{ u\left\vert ~u\left(
0\right) \right. =0\right\}
\end{equation*}%
here 
\begin{equation*}
W\left( Q\right) \equiv \left( W_{2}^{1}\left( 0,T;L_{2}\left( \Omega
\right) \right) \cap L_{2}\left( 0,T;\overset{0}{W}~_{2}^{1}\left( \Omega
\right) \right) \right) ^{3}\cap \left\{ u\left\vert \ div\right.
u=0\right\} .
\end{equation*}%
It is clear that $W\left( Q\right) \subset \left( L_{2}\left( 0,T;\overset{0}%
{W}~_{2}^{1}\left( \Omega \right) \right) \right) ^{3}\cap \left\{
u\left\vert \ div\right. u=0\right\} $ and this inclusion is everywhere
dense. Consequently we will use $W\left( Q\right) $ as an everywhere dense
space in $L_{2}\left( 0,T;V^{\ast }\left( \Omega \right) \right) $.

Further we have the following inequation 
\begin{equation*}
\left\langle \frac{\partial u}{\partial t}-\nu \Delta u-div~\left[ 2\varphi
\left( s\left( u\right) \right) Bu\right] +\overset{3}{\underset{i=1}{\sum }}%
~u_{i}D_{i}u,u\right\rangle =
\end{equation*}%
\begin{equation*}
\frac{1}{2}\frac{\partial }{\partial t}\left\Vert u\right\Vert _{2}^{2}+\nu
\left\Vert \nabla u\right\Vert _{2}^{2}+\left\langle \varphi \left( s\left(
u\right) \right) Bu,Bu\right\rangle \geq \frac{1}{2}\frac{\partial }{%
\partial t}\left\Vert u\right\Vert _{2}^{2}+
\end{equation*}%
\begin{equation*}
\nu \left\Vert \nabla u\right\Vert _{2}^{2}-\left( \frac{\nu }{3}-\delta
\right) \left\Vert Bu\right\Vert _{2}^{2}=\frac{1}{2}\frac{\partial }{%
\partial t}\left\Vert u\right\Vert _{2}^{2}+\left( \frac{2\nu }{3}+\delta
\right) \left\Vert \nabla u\right\Vert _{2}^{2}
\end{equation*}%
holds for any $u\in W\left( Q\right) $. From here follows 
\begin{equation*}
\underset{0}{\overset{t}{\int }}\left\langle \frac{\partial u}{\partial t}%
-\nu \Delta u-div~\left[ 2\varphi \left( s\left( u\right) \right) Bu\right] +%
\overset{3}{\underset{i=1}{\sum }}~u_{i}D_{i}u,u\right\rangle d\tau \geq
\end{equation*}%
\begin{equation*}
\frac{1}{2}\left\Vert u\right\Vert _{2}^{2}\left( t\right) +\left( \frac{%
2\nu }{3}+\delta \right) \underset{0}{\overset{t}{\int }}\left\Vert \nabla
u\right\Vert _{2}^{2}\left( \tau \right) d\tau .
\end{equation*}

Now we will estimate the expression $\left\Vert f(u)-f(v)\right\Vert
_{L_{2}\left( V^{\ast }\right) }$ for any $u,v\in W\left( Q\right) $ with
use of the inequation (*)%
\begin{equation*}
\underset{0}{\overset{T}{\int }}\left\vert \left\langle
f(u)-f(v),w\right\rangle \right\vert dt=\underset{0}{\overset{T}{\int }}%
\left\vert \left\langle \frac{\partial \left( u-v\right) }{\partial t}-\nu
\Delta \left( u-v\right) ,w\right\rangle -\right.
\end{equation*}%
\begin{equation*}
\left. \left\langle div\left[ 2\varphi \left( s\left( u\right) \right)
Bu-2\varphi \left( s\left( v\right) \right) Bv\right] +\overset{3}{\underset{%
i=1}{\sum }}~\left[ u_{i}D_{i}u-v_{i}D_{i}v\right] ,w\right\rangle
\right\vert dt\geq
\end{equation*}%
\begin{equation*}
\left\vert \underset{0}{\overset{T}{\int }}\left\langle \frac{\partial w}{%
\partial t}-\nu \Delta w,w\right\rangle \left( t\right) dt\right\vert -
\end{equation*}%
\begin{equation*}
\underset{0}{\overset{T}{\int }}\left\vert \left\langle div\left[ 2\varphi
\left( s\left( u\right) \right) Bu-2\varphi \left( s\left( v\right) \right)
Bv\right] +\overset{3}{\underset{i=1}{\sum }}~\left[ u_{i}D_{i}u-v_{i}D_{i}v%
\right] ,w\right\rangle \right\vert dt\geq
\end{equation*}%
by virtue of the condition \textit{11} of this example, which can be shown
analogously as (*) 
\begin{equation*}
\frac{1}{2}\underset{0}{\overset{T}{\int }}\frac{\partial }{\partial t}%
\left\Vert w\right\Vert _{2}^{2}\left( t\right) dt+\nu \underset{0}{\overset{%
T}{\int }}\left\Vert w\right\Vert _{V}^{2}~\left( t\right) dt-
\end{equation*}%
\begin{equation*}
\left( \nu -3\delta \right) \left\Vert w\right\Vert _{L_{2}\left( V\right)
}^{2}~-\frac{1}{2}\underset{0}{\overset{T}{\int }}\left( \left\Vert
u\right\Vert _{\left( L_{3}\right) ^{3}}~\left( t\right) +\left\Vert
v\right\Vert _{\left( L_{3}\right) ^{3}}~\left( t\right) \right) \left\Vert
w\right\Vert _{\left( L_{3}\right) ^{3}}^{2}~\left( t\right) dt.
\end{equation*}

It should be noted that from $u\in \left( L^{\infty }\left( 0,T;L_{2}\left(
\Omega \right) \right) \right) ^{3}\cap $ $L_{2}\left( 0,T;V\left( \Omega
\right) \right) $ follows $u_{i}\in L_{4}\left( 0,T;L_{3}\left( \Omega
\right) \right) $, therefore if we continue this estimate, then we have 
\begin{equation*}
\left\Vert f(u)-f(v)\right\Vert _{W_{2}^{-1}\left( V^{\ast }\right)
}\left\Vert w\right\Vert _{L_{2}\left( V\right) }\geq \frac{1}{2}\underset{0}%
{\overset{T}{\int }}\frac{\partial }{\partial t}\left\Vert w\right\Vert
_{2}^{2}\left( t\right) dt+3\delta \underset{0}{\overset{T}{\int }}%
\left\Vert w\right\Vert _{V}^{2}~\left( t\right) dt-
\end{equation*}%
\begin{equation*}
-\frac{1}{2}\underset{0}{\overset{T}{\int }}\left( \left\Vert u\right\Vert
_{\left( L_{3}\right) ^{3}}~\left( t\right) +\left\Vert v\right\Vert
_{\left( L_{3}\right) ^{3}}~\left( t\right) \right) \left\Vert w\right\Vert
_{\left( L_{3}\right) ^{3}}^{2}~\left( t\right) dt.
\end{equation*}

Thus we obtain the following inequation 
\begin{equation*}
\left\Vert f(u)-f(v)\right\Vert _{W_{2}^{-1}\left( V^{\ast }\right) }\geq 
\frac{1}{2}\left\Vert w\right\Vert _{2}\left( T\right) +3\delta \left\Vert
w\right\Vert _{L_{2}\left( V\right) }-
\end{equation*}%
\begin{equation*}
\left( \left\Vert u\right\Vert _{L_{4}\left( \left( L_{3}\right) ^{3}\right)
}+\left\Vert v\right\Vert _{L_{4}\left( \left( L_{3}\right) ^{3}\right)
}\right) \left\Vert w\right\Vert _{L_{p_{0}}\left( \left( L_{3}\right)
^{3}\right) },\ p_{0}=\frac{8}{3}.
\end{equation*}

From here follows, that all conditions of Theorem 2 and Corollary1 are
fulfilled for the considered problem where $F_{1}$ is determined as in above
subsection. Then we can use Theorem 2 in case of Corollary 1 for this
problem. So by using we obtain the correctness of the following statement.

\begin{theorem}
Under the conditions of this subsection the problem (4.2)-(4.3) is solvable
in $W\left( Q\right) \times L_{2}\left( Q\right) $ for any $h\in L_{2}\left(
0,T;V^{\ast }\left( \Omega \right) \right) $.
\end{theorem}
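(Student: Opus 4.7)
The plan is to apply Theorem 2 through Corollary 1 to the evolution operator $F:W(Q)\to L_2(0,T;V^{\ast}(\Omega))$, decomposed as $F=F_0+F_1$ with
\[
F_0(u)\equiv \frac{\partial u}{\partial t}-\nu\Delta u-\operatorname{div}\bigl[2\varphi(s(u))Bu\bigr],\qquad F_1(u)\equiv \sum_{i=1}^{3}u_iD_iu,
\]
in exact parallel with the stationary subsection. The coercive dissipative part is $F_0$, the convective part is the perturbation $F_1$, and the pressure $p\in L_2(Q)$ will be recovered at the end from the solenoidal residual by de Rham's theorem, so the whole task reduces to surjectivity of $F$ onto $L_2(0,T;V^{\ast}(\Omega))$.

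First I would check condition (i) of Theorem 2: $F_0$ is a bounded continuous single-valued open mapping with $F_0(0)=0$. Boundedness and continuity follow from hypotheses 10 and 11 and the definition of $W(Q)$. Openness (equivalently, lower semi-continuity of $F_0^{-1}$) is a consequence of monotonicity of $u\mapsto -\nu\Delta u-\operatorname{div}[2\varphi(s(u))Bu]$ delivered by condition 11 (the inequality (*) transposed to the time-dependent setting) combined with the fact that $\partial_t$ restricted to $\{u\in W(Q):u(0)=0\}$ is a maximal monotone operator; together these yield maximal monotonicity and hence surjectivity of $F_0$, with the lower semi-continuous inverse invoked at the very start of the proof of Theorem~2.

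Second, I would verify condition (ii). The bound $\|F(u)\|_{L_2(V^{\ast})}\leq\mu(\|u\|_{W(Q)})$ is immediate from boundedness of $F$. Coercivity is precisely the computation displayed just before the theorem,
\[
\int_{0}^{T}\langle F(u),u\rangle\,d\tau\geq \tfrac{1}{2}\|u(T)\|_{2}^{2}+\bigl(\tfrac{2\nu}{3}+\delta\bigr)\|u\|_{L_2(V)}^{2},
\]
and the sign relation $\langle F(u),u\rangle\geq k\langle F_0(u),u\rangle$ holds with $k=1$ since $\langle F_1(u),u\rangle=0$ by integration by parts on the divergence-free class $W(Q)$. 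The central step is condition (iii) in the form of Corollary 1. The final estimate preceding the theorem,
\[
\|F(u)-F(v)\|_{W_{2}^{-1}(V^{\ast})}\geq \tfrac{1}{2}\|w(T)\|_{2}+3\delta\|w\|_{L_2(V)}-\bigl(\|u\|_{L_4(L_{3}^{3})}+\|v\|_{L_4(L_{3}^{3})}\bigr)\|w\|_{L_{8/3}(L_{3}^{3})}
\]
with $w=u-v$, is exactly of the form (1.6): the leading term $3\delta\|w\|_{L_2(V)}$ dominates $\|F_0(u)-F_0(v)\|_{L_2(V^{\ast})}$ from below, and the remaining term plays the role of $\varphi(\|w\|_Z;\varepsilon)$, where $Z\equiv L_{8/3}(0,T;L_3(\Omega))^3$ is a space into which $W(Q)$ embeds compactly by the Aubin--Lions lemma (on a bounded three-dimensional domain $L_2(V)\cap W_{2}^{1}(L_2)$ is compact in $L_r(L_s)$ for $r<\infty$, $s<6$).

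Having verified conditions (i), (ii), (iii$'$) on every ball $B_{r}^{W(Q)}(0)$, Corollary 1 yields that $F\bigl(B_{r}^{W(Q)}(0)\bigr)$ is a bodily closed subset of $L_2(0,T;V^{\ast}(\Omega))$ containing the corresponding set $M^{\ast}$. The coercivity above furnishes an a priori bound $\|u\|_{W(Q)}\leq R(\|h\|_{L_2(V^{\ast})})$ on any solution of $F(u)=h$, so for $r$ large enough every prescribed $h$ lies in $F(B_{r}^{W(Q)}(0))$. Solving $F(u)=h$ in the solenoidal class and recovering the pressure $p\in L_2(Q)$ by de Rham produces the required pair $(u,p)\in W(Q)\times L_2(Q)$. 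The main obstacle I anticipate is confirming that the mixed factor $\bigl(\|u\|_{L_4(L_{3}^{3})}+\|v\|_{L_4(L_{3}^{3})}\bigr)\|w\|_{L_{8/3}(L_{3}^{3})}$ indeed factors as a continuous function $\varphi(\|w\|_Z;\varepsilon)$ with $\varphi(0;\varepsilon)=0$ uniformly on a ball of $W(Q)$; this reduces to an interpolation between $L_{8/3}(L_3)$ and $L_2(V)$ together with uniform control of $\|u\|_{L_4(L_{3}^{3})}$ over bounded subsets of $W(Q)$, which the Aubin--Lions embedding supplies but must be invoked carefully so that $\varphi$ depends on the ball radius and vanishes as $\|w\|_Z\to 0$.
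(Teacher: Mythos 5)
Your proposal follows essentially the same route as the paper: the same splitting $F=F_0+F_1$ with the convective term as perturbation, the same coercivity estimate and the same lower bound $\|F(u)-F(v)\|_{W_2^{-1}(V^{\ast})}\geq 3\delta\|w\|_{L_2(V)}-\bigl(\|u\|_{L_4((L_3)^3)}+\|v\|_{L_4((L_3)^3)}\bigr)\|w\|_{L_{8/3}((L_3)^3)}+\ldots$, and the same invocation of Theorem 2 via Corollary 1 with the compact embedding of $W(Q)$ into $L_{8/3}(0,T;(L_3(\Omega))^3)$ playing the role of the space $Z$ in (1.6). Your added details (openness of $F_0$ via monotonicity plus maximal monotonicity of $\partial_t$, the a priori bound to reach arbitrary $h$, and the de Rham recovery of the pressure) only fill in steps the paper leaves implicit, so this is the paper's argument in slightly more explicit form.
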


\section{References}

\end{document}